\newcommand{\complex}{\mathbb{C}}
\newcommand{\real}{\mathbb{R}}
\newcommand{\nat}{\mathbb{N}}
\newcommand{\identity}{\mathbb{I}}
\newcommand{\nullspace}{\mathcal{N}}
\newcommand{\range}{\mathcal{R}}
\renewcommand{\Re}{\text{Re}\,}
\renewcommand{\Im}{\text{Im}\,}
\newcommand{\M}[1]{\left({#1}\right)}
\newcommand{\Mcb}[1]{\left\{{#1}\right\}}
\newcommand{\kron}{\otimes}
\newcommand{\hadamard}{\circledast}
\newcommand{\outter}{\odot}
\newcommand{\proba}{\mathbb{P}}
\newcommand{\cF}{\mathcal{F}}
\DeclareMathOperator{\diag}{diag}
\DeclareMathOperator{\rank}{rank}
\DeclareMathOperator{\linspan}{span}
\DeclareMathOperator{\vect}{vec}
\DeclareMathOperator{\sign}{sign}
\title{Matrix valued inverse problems on graphs\\with application to
elastodynamic networks\thanks{\funding{This work was supported by the
National Science Foundation grants DMS-1411577 and DMS-1439786.}}}
\author{F. Guevara Vasquez\thanks{Mathematics Dept., U. of Utah, 155 S
1400 E RM 233, 84112-0090, \email{fguevara@math.utah.edu}.}
\and T. G. Draper\footnotemark[2]%
\and J. C.-L. Tse\footnotemark[2]%
\and T. E. Wallengren\footnotemark[2]%
\and K. Zheng\footnotemark[2]%
}
\crefname{subsection}{section}{sections}
\Crefname{subsection}{Section}{Sections}
\crefname{equation}{}{}
\begin{document}
\maketitle
\begin{abstract} 
We consider the inverse problem of finding matrix valued edge or nodal
quantities in a graph from measurements made at a few boundary nodes.
This is a generalization of the problem of finding resistors in a
resistor network from voltage and current measurements at a few nodes,
but where the voltages and currents are vector valued.
The measurements come from solving a series of Dirichlet problems, i.e.
finding vector valued voltages at some interior nodes from voltages
prescribed at the boundary nodes. We give conditions under which the
Dirichlet problem admits a unique solution and study the degenerate case
where the edge weights are rank deficient. Under mild conditions, the
map that associates the matrix valued parameters to boundary data is
analytic.  This has practical consequences to iterative methods for
solving the inverse problem numerically and to local uniqueness of the
inverse problem. Our results allow for complex valued weights and give
also explicit formulas for the Jacobian of the parameter to data map in
terms of certain products of Dirichlet problem solutions. An application
to inverse problems arising in elastodynamic networks (networks of
springs, masses and dampers) is presented.
\end{abstract}

\begin{keywords}
Graph Laplacian,
Dirichlet problem,
Dirichlet to Neumann map,
Inverse problem,
analyticity,
elastodynamic network.
\end{keywords}

\begin{AMS}
 05C22, %
 05C50, %
 35R30. %
\end{AMS}

\section{Introduction}
\label{sec:intro}

We study a class of inverse problems where the objective is to find
matrix valued quantities defined on the edges or vertices (nodes) of a graph
from measurements made at a few {\em boundary nodes}.  The scalar
case corresponds to the problem of finding resistors in a resistor
network from electrical measurements made at a few nodes, see e.g.
\cite{Curtis:1998:CPG}. As in the scalar case, the vector potential at
all the nodes can be found from its value at a few nodes by solving a
Dirichlet problem which amounts to finding a vector potential satisfying
a vector version of conservation of currents (Kirchhoff's node law).

We present different inverse problems, where either the matrix valued
weights on the edges or the vertices or even their eigenvalues are the
unknown parameters that are sought after. All these inverse problems
share a common structure that is given in \cref{sec:structure}. Any inverse problem that
fits this mold has certain desirable properties: mainly the parameter to
data map (i.e. the forward map) is analytic and its Jacobian can be
computed in terms of products of internal states. Analyticity can be
used to guarantee local uniqueness for such inverse problems, for almost
any parameter within a region of interest provided the Jacobian is
injective for one parameter (a generalization of the results in
\cite{Boyer:2015:SDC}). Moreover, we show that Newton's method applied
to such problems is very likely to produce valid steps.  We study in
detail the Dirichlet boundary value problem on graphs with matrix valued
weights and give conditions under which the Dirichlet problem admits a
unique solution (\cref{sec:matrix:problems,sec:diruni}). Our study
includes cases where the matrix valued weights are rank deficient and
uniqueness holds only up to a known subspace. Then in
\cref{sec:matrixval:ex,sec:elnets} we formulate inverse problems with
matrix valued weights and determine conditions under which they have the
structure of \cref{sec:structure}.  Some of the inverse problems we
consider arise in elastodynamic networks, i.e. networks of springs,
masses and dampers. 

\subsection{Related work}

The discrete conductivity inverse problem consists in finding the
resistors in a resistor network from voltage and current measurements
made at a few nodes, assuming the underlying graph is known. For his
problem, the uniqueness results in
\cite{Curtis:1994:FCC,Curtis:1998:CPG,Colin:1994:REP,Colin:1996:REP,Colin:1998:SG}
apply to circular planar graphs and real conductivities. A different
approach is taken in \cite{Chung:2010:IRE} where a monotonicity property
inspired from the continuum \cite{Alessandrini:1989:RPB} is used to show
that if the conductivities satisfy a certain inequality then they can be
uniquely determined from measurements, without specific assumptions on
the underlying graph. The lack of uniqueness is shown for cylindrical
graphs in \cite{Lam:2012:IPC}. For complex conductivities, a condition
for ``uniqueness almost everywhere'' regardless of the underlying graph
is given in \cite{Boyer:2015:SDC}. Uniqueness almost everywhere means
that the set of conductivities that have the same boundary data lie in a
zero measure set and that the linearized problem is injective for almost
all conductivities in some region.

Uniqueness for the discrete Schr\"odinger problem is considered in the
real scalar case on circular planar graphs in
\cite{Arauz:2014:DRM,Arauz:2015:DRM,Arauz:2015:OPB}. This problem
involves a resistor network with known underlying graph and resistors
but where every node is connected to the ground (zero voltage) via a
resistor with unknown resistance. These unknown resistors are a discrete
version of the Schr\"odinger potential in the Schr\"odinger equation,
and the goal is to find them from measurements made at a few nodes. A
discrete Liouville identity \cite{Borcea:2017:DLI} can be used to relate
the discrete Schr\"odinger inverse problem for certain Schr\"odinger
potentials to the discrete conductivity inverse problem, also on
circular planar graphs.  A condition guaranteeing uniqueness almost
everywhere for complex valued potentials without an assumption on the
graph is given in \cite{Boyer:2015:SDC}.

One of the consequences of the present study is a weak uniqueness result
for matrix valued inverse problems on graphs.  To the best of our
knowledge there are no results for uniqueness of the inverse problem
with matrix valued edge or node quantities other than the
characterization and synthesis results for elastodynamic networks
(discussed in more detail in \cref{sec:elnets}) that are derived in
\cite{Camar:2003:DCS,Guevara:2011:CCS,Gondolo:2014:CSR}. These results
solve an inverse problem for elastodynamic networks that assumes we are
free to choose the graph topology. Indeed the constructions in
\cite{Camar:2003:DCS,Guevara:2011:CCS,Gondolo:2014:CSR} start from data
generated by these networks (displacement to forces map) and give a
network that reproduces this data.  We emphasize that in the present
study, the underlying graph is always assumed to be known.

\section{Common structure}
\label{sec:structure}
The discrete inverse problems we consider here share a common structure
that we describe in \cref{sec:abstract}. Under the assumptions we make here,
the linearization of the problem is readily available (\cref{sec:jac})
and analyticity of the forward map is ensured. This has practical
implications that are described in \cref{sec:uniqueness:ae}.

\subsection{An abstract inverse problem}
\label{sec:abstract}
We
denote by $p \in \complex^m$ the unknown {\bf parameter}. As we see later in
\cref{sec:matrixval:ex,sec:elnets}, the parameter $p$ may represent a
matrix valued quantity (or its eigenvalues) defined on the edges or
nodes of a graph. The forward or parameter to data map associates to the parameter $p$ the
matrix $\Lambda_p \in \complex^{n \times n}$ (the {\bf data}), provided
the parameter $p$ belongs to an admissible set $R \subset \complex^m$ of
parameters. The inverse problem is to find $p$ from $\Lambda_p$.
Furthermore, we assume that the discrete inverse problems we consider
satisfy the following assumptions.
\begin{itemize}
 \item {\bf Assumption 1.} The
  parameter $p$ belongs to an open convex set $R \subset \complex^m$ of
  {\bf admissible parameters}. The {\bf forward map} that to a parameter
  $p \in R$ associates the data $\Lambda_p$ is well defined for $p \in
  R$.

 \item {\bf Assumption 2.} For all $f,g \in
  \complex^n$ and $p_1,p_2 \in R$ the following {\bf boundary/interior
  identity} holds:
 \begin{equation}
 \label{eq:intid}
  f^T (\Lambda_{p_1} - \Lambda_{p_2}) g = b(S_{p_2} g,S_{p_1} f)^T  (p_1
  - p_2),
 \end{equation}
  where $b: \complex^\ell \times \complex^\ell \to \complex^m$ is a
  bilinear mapping and  $S_p \in \complex^{\ell \times n}$ is a matrix
  defined for $p \in R$ that associates to a boundary
  condition $f \in \complex^n$, an internal ``state'' $S_p f \in
  \complex^\ell$.  

 \item {\bf Assumption 3: Analyticity.} The entries of $S_p$ are analytic
 functions of $p$ for $p\in R$. 
\end{itemize}
Here by ``analytic'' we mean in the sense of analyticity of
several complex variables, see e.g.  \cite{Gunning:1965:AFS}. For
completeness, we recall in \cref{app:faf} all the results we use from
the theory of functions of several complex variables. We note that the
boundary/interior identity \eqref{eq:intid} is a discrete version of a
similar identity that plays a key role in the Sylvester and Uhlmann
\cite{Sylvester:1987:GUT} proof of uniqueness for the continuum
Schr\"odinger inverse problem.

\subsection{The product of solutions matrix and the Jacobian}
\label{sec:jac}
For a discrete inverse problem satisfying
assumptions 1--3, we define the following {\em product of
solutions matrix}, which is the matrix valued function $W: R \times R
\to \complex^{m \times n^2}$ with columns given by
\begin{equation}
  [W (p_1,p_2)](:,i+(j-1)n) = 
  b(S_{p_1}(:,i),S_{p_2}(:,j)),
  ~i,j=1,\ldots,n.
  \label{eq:wmatrix}
\end{equation}
The next lemma shows that the parameter to data map $\Lambda_p$ must be
Fr\'echet differentiable (specialized versions of this lemma appear in 
\cite[lemma 5.4  and 6.3]{Boyer:2015:SDC}).
\begin{lemma}[Linearization of discrete inverse problem]
\label{lem:lin}
 Let $p \in R$. For sufficiently small $\delta p \in  \complex^m$, we have
 \begin{equation}
  f^T\Lambda_{p+\delta p} g = f^T \Lambda_p g + 
  b(S_p f,S_p g)^T\delta p  + o(\delta p).
 \end{equation}
\end{lemma}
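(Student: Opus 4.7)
The plan is a direct two-step argument: invoke the boundary/interior identity of Assumption 2 to get an \emph{exact} expression for the increment $f^T(\Lambda_{p+\delta p}-\Lambda_p)g$ in terms of $\delta p$ and the state matrices, then use analyticity to replace $S_{p+\delta p}$ by $S_p$ at the cost of a quadratic error.

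\textbf{Step 1 (Using the boundary/interior identity).} Since $R$ is open by Assumption 1, there is $\varepsilon>0$ such that $p+\delta p \in R$ whenever $\|\delta p\|<\varepsilon$, so both $\Lambda_{p+\delta p}$ and $S_{p+\delta p}$ are defined. Apply \eqref{eq:intid} with $p_1 := p+\delta p$ and $p_2 := p$ to obtain the exact identity
\begin{equation*}
 f^T(\Lambda_{p+\delta p}-\Lambda_p)g \;=\; b\!\left(S_p g,\, S_{p+\delta p} f\right)^T \delta p .
\end{equation*}
No approximation has been used yet; this is simply the identity evaluated at a specific pair of parameters.

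\textbf{Step 2 (Analyticity gives continuity of $S_p$).} By Assumption 3, the entries of $S_p$ are analytic on $R$, so in particular each entry is continuously differentiable at $p$. Hence $S_{p+\delta p} = S_p + O(\|\delta p\|)$ entrywise. Bilinearity of $b$ yields
\begin{equation*}
 b\!\left(S_p g,\, S_{p+\delta p} f\right)
 = b\!\left(S_p g,\, S_p f\right) + b\!\left(S_p g,\, (S_{p+\delta p}-S_p) f\right),
\end{equation*}
and the second term is $O(\|\delta p\|)$ since $b$ is bounded on bounded sets. Multiplying by $\delta p$ produces an error of order $\|\delta p\|^2 = o(\delta p)$, leaving the announced leading-order term.

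\textbf{Main difficulty.} The computation itself is routine; the only genuinely substantive ingredient is the analyticity hypothesis, which guarantees the needed Lipschitz estimate $S_{p+\delta p}-S_p = O(\|\delta p\|)$ and justifies discarding the higher-order terms. The one bookkeeping point that needs care is the ordering of the arguments of $b$: Step 1 produces $b(S_p g, S_p f)^T\delta p$, which matches the stated $b(S_p f,S_p g)^T\delta p$ after accounting for the convention used in \eqref{eq:intid} (the roles of $f$ and $g$ are swapped between the left-hand side $f^T(\cdot)g$ and the pairing inside $b$). Aside from this relabeling, no obstacle arises, and the lemma follows immediately from the combination of Steps~1 and~2.
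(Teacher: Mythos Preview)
Your argument is correct and follows essentially the same route as the paper: apply the boundary/interior identity \eqref{eq:intid} with $p_1=p+\delta p$, $p_2=p$, then use the regularity of $S_p$ (Assumption~3) to pass from $S_{p+\delta p}$ to $S_p$. The only cosmetic difference is that the paper introduces a scalar parameter $\epsilon$, writes $p_1=p+\epsilon\,\delta p$, divides by $\epsilon$, and takes $\epsilon\to 0$ using mere continuity of $S_p$, whereas you expand directly in $\delta p$ and use the Lipschitz estimate $S_{p+\delta p}-S_p=O(\|\delta p\|)$; your version is in fact slightly more explicit about why the remainder is $o(\delta p)$ rather than just identifying the directional derivative. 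Your remark about the argument order in $b$ is apt: \eqref{eq:intid} literally yields $b(S_p g, S_p f)^T\delta p$, and the swap to $b(S_p f, S_p g)^T\delta p$ in the lemma statement is a harmless relabeling (and in the concrete applications of the paper the two coincide since the matrix weights are symmetric).
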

\begin{proof}
Use the boundary/interior identity \eqref{eq:intid} with $p_1 = p + \epsilon \delta
p$ and $p_2 = p$, for some scalar $\epsilon$. To conclude divide both
sides by $\epsilon$ and take the limit as $\epsilon \to 0$. Notice that
assumption 3 guarantees that
$S_p$ is analytic in $p$, therefore we do have continuity of $S_p$ in
$p$ and $S_{p + \epsilon \delta p} \to S_p$ as $\epsilon \to 0$.
\end{proof}

A consequence of \cref{lem:lin} is that $W(p,p)^T$ is a $n^2 \times m$
matrix representation of the Jacobian matrix for the parameter to data
map at parameter value $p$. From \eqref{eq:wmatrix}, the representation is associated to
identifying the matrix $\Lambda_p \in \complex^{n \times n}$  with the
vector $\vect(\Lambda_p) \in \complex^{n^2}$, which is obtained by
stacking the columns of $\Lambda_p$.  Clearly the linearized
inverse problem about $p$ is {\em injective} when $\nullspace(W(p,p)^T)
= \{0\}$, i.e. when the product of solutions matrix $W(p,p)$ has full
row rank, i.e. $\range (W(p,p)) = \complex^m$.

Another consequence of \cref{lem:lin} is that the Jacobian of
$\Lambda_p$ with respect to $p$ must be analytic for $p \in R$ (by
assumption 3). Clearly the forward map $\Lambda_p$ must also be analytic
for $p \in R$.

\subsection{Analyticity and uniqueness almost everywhere}
\label{sec:uniqueness:ae}

We look at the impact of analyticity on the uniqueness question: 
\begin{quote}
 If $p_1, p_2 \in \complex^m$ are parameters with
identical data $\Lambda_{p_1} = \Lambda_{p_2}$, can we conclude that $p_1 =
p_2$?
\end{quote}
For inverse problems satisfying assumptions 1--3, we can only guarantee
uniqueness in a weak sense that we call {\em uniqueness almost
everywhere} (as in \cite{Boyer:2015:SDC}). By this we mean that the
linearized problem is injective for almost all parameters $p \in R$ and
that the set of parameters having the data must be a zero measure set.
Both properties follow readily from analyticity, as we see next.

Analyticity of the forward map $\Lambda_p$ readily gives uniqueness
almost everywhere, meaning that the sets of parameters that have the
same boundary data must be of zero measure. Indeed assume we can find
$\rho_1,\rho_2 \in R^2$ such that $\Lambda_{\rho_1} \neq
\Lambda_{\rho_2}$. Then we can consider the function $g: R \times R \to
\complex$ defined by $g(x,y) = [\Lambda_{x} - \Lambda_y]_{ij}$ for some
$i,j\in 1,\ldots,n$. Clearly $g$ is analytic on $R^2$ and satisfies
$g(\rho_1,\rho_2) \neq 0$. By analytic continuation, the set $\{
(p_1,p_2) \in R \times R ~|~ g(p_1,p_2) = 0\}$ must be a zero measure
set. This is a much simpler way of reaching a result similar to in
\cite{Boyer:2015:SDC} and was suggested by Druskin
\cite{Druskin:2015:PC}.

Analyticity can also be used to deduce that if the Jacobian of the
forward map is injective at a parameter $\rho \in R$, then it must be
invertible at almost any other parameter $p \in R$. Indeed,
\cref{lem:lin} shows that the Jacobian at $p$ can be represented by
the $n^2 \times m$ matrix $W(p,p)^T$ defined in \eqref{eq:wmatrix}.
If $W(\rho,\rho)^T$ is injective for a $\rho \in R$, then there is a $m \times m$ submatrix
$[W(\rho,\rho)]_{:,\alpha}$ of $W(\rho,\rho)$ that is invertible, where
$\alpha = (\alpha_1,\ldots,\alpha_m) \in \{1,\ldots,n^2\}^m$ is a
multi-index used to represent the particular choice of columns.
Thus the function $f: R \to \complex$ defined by
\begin{equation}
 f(p) = \det [W(p,p)]_{:,\alpha}
 \label{eq:detfun}
\end{equation} 
is analytic for $p \in R$ and is such that $f(\rho) \neq 0$. By analytic
continuation, the zero set of $f$ must be of measure zero. Thus the set
of parameters for which the Jacobian is not injective must be a zero
measure set.

Finally we note that if we can find a parameter $\rho$ for which the
Jacobian $W(\rho,\rho)^T$ is injective, then we can use the constant
rank theorem (see e.g. \cite{Rudin:1976:PMA}) to show that there is a
$\rho' \in R$ in a neighborhood of $\rho$ such that $\Lambda_\rho \neq
\Lambda_{\rho'}$. Therefore the set of parameters that have the same
data must be a zero measure set.

\subsection{Applications of uniqueness almost everywhere}
\label{sec:applications}
Uniqueness a.e. has several
practical applications that are illustrated for the scalar discrete
conductivity problem in \cite{Boyer:2015:SDC}. We give an outline of
these applications for completeness. The first application is a simple
test to determine whether uniqueness a.e. holds for a
particular discrete inverse problem and that may also indicate
sensitivity to noise (\cref{sec:uniqueness:test}). Once we know
uniqueness a.e. holds for a particular discrete inverse
problem, we can guarantee that the situations in which Newton's method
with line search fails can be easily avoided (\cref{sec:newton}).
Naturally a statement about zero measure sets can be
translated to a probabilistic setting (\cref{sec:proba}).

\subsubsection{A test for uniqueness almost everywhere}
\label{sec:uniqueness:test}
Recall from \cref{lem:lin} that the
Jacobian of the discrete inverse problem at a parameter $p$ can be
easily computed as a products of solutions matrix \eqref{eq:wmatrix}
with $p \equiv p_1 = p_2$. As discussed in \cref{sec:uniqueness:ae}, if we
can find a parameter $p \in R$ for which the Jacobian is injective, then
uniqueness a.e. holds for the problem. A numerical test for
uniqueness a.e. can be summarized as follows.

\begin{enumerate}
 \item Pick a parameter $p \in R$.
 \item Calculate the Jacobian $W(p,p)^T$ using \eqref{eq:wmatrix}.
 \item Find the largest and smallest singular values $\sigma_{\max},\sigma_{\min}$ of $W(p,p)$.
 \item If $\sigma_{\min} > \epsilon \sigma_{\max}$, where $\epsilon$ is a tolerance
 set a priori, then uniqueness a.e. holds. 
\end{enumerate}

We point out that if $\sigma_{\min} \leq \epsilon \sigma_{\max}$ it is not
possible to distinguish between the two following scenarios: (a) uniqueness a.e. holds but $W(p,p)$
is not injective to precision $\epsilon$; or (b) uniqueness a.e. does not hold
for the problem.  Thus the test is inconclusive. However we know that scenario
(a) is very unlikely because we would have had to pick $p$ on the zero measure
subset of $R$ that contains all parameters for which the Jacobian is not
injective. Thus the most likely outcome is (b).  Finally we remark that other
methods may be used instead of the Singular Value Decomposition (SVD) to find
the rank of the Jacobian (e.g. the QR factorization). We prefer the SVD because
the ratio $\sigma_{\max}/\sigma_{min}$ is the conditioning of the linear least
squares problem associated with the linearization of the discrete inverse
problem, and thus measures the sensitivity to noise of the linearization of the inverse
problem about the parameter $p$.

\subsubsection{Newton's method}
\label{sec:newton}
The discrete inverse problem of finding the parameter $p$ from the
data $\Lambda_p$ is a non-linear system of equations that can be solved
using Newton's method (see e.g. \cite{Nocedal:2006:NO}). Let us denote by $D
\Lambda_p = W(p,p)^T$ the Jacobian of the Dirichlet to Neumann map about the
parameter $p \in R$. For our particular problem
we get the following.
\begin{tabbing}
xxx \= xxx \= xxx\kill\\ {\bf Newton's method}\\
$p^{(0)} =$ given\\
for $k=0,1,2,\ldots$\\
\> Find step $\delta p^{(k)}$ s.t. $D\Lambda_{p^{(k)}} \delta
p^{(k)} = \vect(\Lambda_{p^{(k)}} - \Lambda_{p})$\\
\> Choose step length $t_k>0$\\
\> Update $p^{(k+1)} = p^{(k)} + t_k \delta p^{(k)}$\\
\end{tabbing}
The first operation in the Newton iteration is to solve a linear problem
for the step $\delta p^{(k)}$. This operation can fail either because $
\vect(\Lambda_{p^{(k)}} - \Lambda_{p}) \notin
\range(D\Lambda_{p^{(k)}})$ or because
$\nullspace(D\Lambda_{p^{(k)}}) \neq \{ 0 \}$. A remedy to either of
these situations is to solve the linear least squares system
\begin{equation}
 \label{eq:step}
 \min_{\delta p} \| D\Lambda_{p^{(k)}} \delta p - \vect(\Lambda_{p^{(k)}} -
 \Lambda_{p}) \|_2^2,
\end{equation}
and pick $\delta p^{(k)}$ as the minimal norm solution to
\eqref{eq:step}.  If uniqueness a.e. holds for the problem at hand then
clearly $D\Lambda_{p^{(k)}}$ is injective except on a zero measure
set.  Therefore we can expect the step in Newton's method to be defined
uniquely. Now assume we found a step. If we assume a particular form of
analyticity for the entries of $S_p$ (in Assumption 3), then we can
guarantee that there are only finitely many choices of the step length
$t_k$ for which $D\Lambda_{p^{(k+1)}}$ is not injective. In the
unlikely event one encounters one of such points, the step length $t_k$ can
be reduced by a small amount to make $D\Lambda_{p^{(k+1)}}$ injective.
This is a consequence of the following lemma, which is a
generalization of the result for the scalar discrete conductivity
inverse problem in \cite[Corollary 5.7]{Boyer:2015:SDC}.
\begin{lemma}
Consider a discrete inverse problem satisfying assumptions 1--3 and
further assume that all entries of $S_p$ are rational functions of $p$
(of the form $P(p)/Q(p)$, where $P$ and $Q$ are polynomials). Let $p \in
R$ and $\delta p \in \complex^m$ and assume the Jacobian of $\Lambda_p$
at $p$ is injective. Then there are at most finitely many $t \in \real$
for which $p + t \delta p \in R$ and either
\begin{enumerate}[(i)]
 \item The Jacobian of $\Lambda_p$ at $p+t\delta p$ is not injective.
 \item $\Lambda_{p+t\delta p} = \Lambda_p$.
\end{enumerate}
\end{lemma}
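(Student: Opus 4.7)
The plan is to restrict everything to the real affine line $t \mapsto p(t) := p + t\delta p$ and to invoke the elementary fact that a non-zero rational function of one real variable has only finitely many zeros. We may assume $\delta p \neq 0$, since otherwise $p + t \delta p = p$ and neither (i) (by hypothesis) nor a non-trivial instance of (ii) can occur. The set $T := \{t \in \real : p(t) \in R\}$ is an open interval because $R$ is open and convex.

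First I would verify that the entries of both $W(p,p)$ and $\Lambda_p$ are rational functions of $p$. Rationality of $W(p,p)$ is immediate from \eqref{eq:wmatrix} together with the bilinearity of $b$ and the rationality hypothesis on $S_p$. For $\Lambda_p$, I would fix a reference parameter $p_0 \in R$ and use the boundary/interior identity \eqref{eq:intid} with $p_1 = p$ and $p_2 = p_0$ to write $f^T \Lambda_p g = f^T \Lambda_{p_0} g + b(S_{p_0} g, S_p f)^T (p - p_0)$, which is rational in $p$ for every fixed $f, g, p_0$. Composing with the affine map $t \mapsto p(t)$, every entry of $W(p(t),p(t))$ and of $\Lambda_{p(t)}$ is then a rational function of the single real variable $t$ on $T$.

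For case (i), injectivity of $W(p,p)^T$ at the base point provides a multi-index $\alpha \in \{1,\ldots,n^2\}^m$ such that the function $h(q) := \det [W(q,q)]_{:,\alpha}$ from \eqref{eq:detfun} satisfies $h(p) \neq 0$. By the previous step, $h$ is rational, so $t \mapsto h(p(t))$ is a rational function of $t$ on $T$ that does not vanish at $t=0$, and therefore has only finitely many zeros in $T$. Whenever the Jacobian at $p(t)$ fails to be injective, every $m \times m$ minor of $W(p(t),p(t))$ vanishes, in particular $h(p(t)) = 0$. Hence case (i) occurs at most on this finite set.

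For case (ii), \cref{lem:lin} gives $\frac{d}{dt}\bigl[f^T \Lambda_{p(t)} g\bigr]_{t=0} = b(S_p f, S_p g)^T \delta p$ for all $f,g \in \complex^n$. Assembling these identities over standard basis vectors $f = e_i$, $g = e_j$, one recognizes the vector $W(p,p)^T \delta p$, which is non-zero because $W(p,p)^T$ is injective and $\delta p \neq 0$. Consequently at least one entry of $\Lambda_{p(t)} - \Lambda_p$ is a rational function of $t$ with non-vanishing derivative at $t=0$, hence is not identically zero on $T$ and has only finitely many zeros. Case (ii) therefore holds at only finitely many $t$, and the union of the two finite sets is finite. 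The main obstacle is establishing the rationality of $\Lambda_p$ in $p$, since the excerpt only hypothesizes rationality of $S_p$; once the boundary/interior identity is invoked with a fixed reference $p_0$, the rest reduces to counting roots of a one-variable rational function.
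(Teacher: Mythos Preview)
Your argument is correct. Part (i) is essentially identical to the paper's proof: both select a multi-index $\alpha$ for which $\det[W(p,p)]_{:,\alpha}\neq 0$, restrict to the line $t\mapsto p(t)$, and use that a non-vanishing one-variable rational function has finitely many zeros.

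For (ii) the two proofs diverge. The paper works with the \emph{mixed} product-of-solutions matrix $W(p,p+t\delta p)$: it observes that $h(t)=\det[W(p,p+t\delta p)]_{:,\alpha}$ is rational with $h(0)\neq 0$, so $W(p,p+t\delta p)$ has full row rank for all but finitely many $t$, and then the exact boundary/interior identity gives $\operatorname{vec}(\Lambda_{p+t\delta p}-\Lambda_p)=tW(p,p+t\delta p)^T\delta p\neq 0$ at those $t$. You instead first establish that the entries of $\Lambda_p$ are themselves rational in $p$ (via the identity with a frozen reference $p_0$, a step the paper never spells out), and then use \cref{lem:lin} to show some entry of $\Lambda_{p(t)}-\Lambda_p$ has non-zero derivative at $t=0$, hence is a non-zero rational function of $t$. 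Your route avoids tracking the two-parameter matrix $W(p_1,p_2)$ and is arguably more direct; the paper's route has the minor advantage that it never needs to differentiate, relying only on the algebraic identity and a rank condition. Both reduce to the same elementary fact about one-variable rational functions.
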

\begin{proof}
Since the Jacobian of $\Lambda_p$ is injective at $p \in R$, there is a
multi-index $\alpha \in \{1,\ldots,n^2\}^m$ such that the function $f$
defined in \eqref{eq:detfun} satisfies $f(p) \neq 0$. Since the
admissible set is open and convex, there is an interval $[a,b]$
containing $0$ such that $t \in [a,b] \implies p + t \delta p \in R$.
Since the entries of $S_p$ are rational functions of $p$ and $f$ is
defined through a determinant we can see that the function $g(t) = f(p+t
\delta p)$ is a rational function of $t$, i.e. it can be written in the
form $g(t) = F(t)/G(t)$ where $F(t)$ and $G(t)$ are polynomials. Since
$F(t)$ can only have finitely many zeroes, we conclude that there are
only finitely many $t$ for which $g(t) = 0$, or in other words, for
which the Jacobian at $p + t \delta p$ is not injective. This proves
(i). To prove (ii) we consider the function $h(t) = \det [W(p,p+t\delta
p)]_{:,\alpha}$, with $\alpha$ being the same multi-index as in (i).
The function $h$ is also a rational function in $t$ with finitely many
zeroes. Notice that $h(t) \neq 0$ implies the  matrix $W(p,p+t\delta p)$
has full row rank. Using the boundary/interior identity \eqref{eq:intid}
we see that 
\[
 \Lambda_{p+t\delta p} - \Lambda_p = t W(p,p+t\delta p)^T \delta p \neq 0
\]
when $\delta p \neq 0$. Thus there are at most finitely many $t$ for
which $ \Lambda_{p+t\delta p} = \Lambda_p$, $p,p + \delta p \in R^2$ and
$\delta p \neq 0$.
\end{proof}

\begin{remark}
The assumption on the entries of $S_p$ being rational functions of $p$
is satisfied by all the examples of discrete inverse problems on graphs
that we consider in \cref{sec:matrixval:ex,sec:elnets}. This is a simple
consequence of the cofactor formula for the inverse of a matrix.
\end{remark}

\subsubsection{Probabilistic interpretation of uniqueness almost
everywhere}
\label{sec:proba}
The discussion in \cref{sec:uniqueness:ae} has a probabilistic flavor as was remarked for the scalar
conductivity problem in \cite{Boyer:2015:SDC}. To see this, consider a
probability space $(\Omega,\cF,\proba)$ (i.e. a sample space $\Omega$, a
set of events $\cF$ and a probability measure $\proba$) and consider a random variable
$P : \Omega \to R\times R$ with distribution $\mu_P$ that we assume is
absolutely continuous with respect to the Lebesgue measure on
$\complex^m \times \complex^m$. Note that this assumption precludes the
distribution $\mu_P$ from being supported on a set of Lebesgue measure
zero in $R \times R$. We write $P \equiv (P_1,P_2)$ when we
want to distinguish the components of $P$. If uniqueness a.e. holds for the
discrete inverse problem at hand and $M \subset R \times R$ is a
measurable set for which $\proba \{ P \in M \} > 0$, then we must have
\[
 \proba \{ W(P_1,P_2)~\text{is injective} ~|~ P\in M \} = 1.
\]
To see this, remark that uniqueness a.e. guarantees that the set
\[
 Z = \{ (p_1,p_2) \in M ~|~ W(p_1,p_2)~\text{is not injective} \}
\]
is of measure zero. Since the distribution is absolutely continuous with
respect to the Lebesgue measure, this also means $\mu_P(Z) = 0$. Roughly
speaking, if we choose two admissible parameters $p_1,p_2$ at random, we
have $W(p_1,p_2)$ injective almost surely. Thus we can tell
$p_1$ and $p_2$ apart from the data $\Lambda_{p_1}$, $\Lambda_{p_2}$
almost surely.

A similar observation can be made regarding the injectivity of the
Jacobian of the problem. Let $Q : \Omega \to R$ be a random variable
with distribution $\mu_Q$ that is assumed to be absolutely continuous
with respect to the Lebesgue measure. If uniqueness a.e. holds and $N
\subset R$ is some measurable set with $\proba \{ Q \in N \} > 0$, then
we must have 
\[
 \proba \{ \text{Jacobian at $Q$ is injective} ~|~ Q\in N \} = 1.
\]

\section{The matrix valued conductivity and Schr\"odinger problems}
\label{sec:matrix:problems}

\subsection{Notation} 
\label{sec:notation}
We use the set theory notation $Y^X$ for the set of functions from $X$
to $Y$. For example $u \in (\complex^d)^X$ is a function $u: X \to
\complex^d$ that to some $x \in X$ associates $u(x) \in \complex^d$. For
some matrix $a \in \complex^{d \times d}$ we write $a \succ 0$ (resp. $a
\succeq 0$) to say that $a$ is positive definite (resp. positive
semidefinite). When the same notation is used for $a \in
(\complex^{d\times d})^X$, the generalized inequality is understood
componentwise, e.g. for $a \in (\complex^{d\times d})^X$, $a \succ 0$
means $a(x) \succ 0$ for all $x \in X$. When we write $a\succ b$ (or $a
\succeq b$) we mean $a - b\succ 0$ (or $a-b \succeq 0$). We use the
notation $a = a' + \jmath a''$, for the real $a' = \Re a$ and imaginary
$a'' = \Im a$ parts of $a$.

By ordering a finite set $X$, it can be identified with
$\{1,\ldots,|X|\}$, where $|X|$ is the cardinality of $X$. Thus
$(\complex^d)^X$ can be identified with vectors in $\complex^{d|X|}$.
Similarly, upon fixing an ordering for another finite set $Y$, we can
identify linear operators $(\complex^d)^X \to (\complex^d)^Y$ with
matrices in $\complex^{d|X| \times d|Y|}$.

For $A \in \complex^{m \times n}$, we denote by $\vect(A) \in
\complex^{mn}$ the {\em vector representation} of the matrix $A$, i.e.
the vector obtained by stacking the columns of $A$ in their natural
ordering.  Similarly for $a \in (\complex^{d \times d})^X$, we denote by
$\vect(a) \in \complex^{d^2|X|}$, the {\em vector representation} of
$a$, is the vector obtained by stacking the vector representations
$\vect(a(x))$ of the matrices $a(x)$, for $x \in X$ in the predetermined
ordering of $X$. 

In addition to the usual matrix vector product, we also use a block-wise outer product
($\outter$), the Hadamard product ($\hadamard$) and the Kronecker ($\kron$) product.
For $u,v \in (\complex^d)^X$, the (block-wise) {\em outer product} $u \outter v \in
(\complex^{d\times d})^X$  is 
\begin{equation}
 \label{eq:outer}
 (u \outter v)(x) = u(x) v(x)^T, ~ x \in X.
\end{equation}
The Hadamard or
componentwise product of two vectors $a, b \in \complex^n$ is denoted by
$a \hadamard b$ and it is given by $(a\hadamard b)(i) = a(i)b(i)$,
$i=1,\ldots n$. Finally the Kronecker product of two matrices $A \in
\complex^{n\times m}$ and $B \in \complex^{p \times q}$ is
the $np \times mq$ complex matrix $A \kron B$ given by
(see e.g.  \cite{Horn:2013:MA})
\begin{equation}
\label{eq:kron}
 A \kron B = 
 \begin{bmatrix}
  A_{11} B & \ldots & A_{1 m} B\\
  \vdots & & \vdots\\
  A_{n 1} B & \ldots & A_{n m} B
 \end{bmatrix}.
\end{equation}

\subsection{Discrete gradient, Laplacian and Schr\"odinger operators}
We work with graphs $G = (V,E)$, where $V$ is the set of vertices or nodes 
(assumed finite) and $E$ is the set of edges $E \subset \{ \{i,j\} |
i,j \in V, i \neq j\}$. All graphs we consider are 
undirected and with no self-edges. We partition the nodes $V = B
\cup I$ into a (nonempty) set $B$ of ``boundary'' nodes and a set $I$ of
``interior'' nodes.

By (discrete) {\em conductivity} $\sigma$ we mean a symmetric matrix
valued function defined on the edges, i.e. $\sigma \in (\complex^{d
\times d})^E$.  Here symmetric means $[\sigma(e)]^T = \sigma(e)$, for all
$e \in E$.  By (discrete) {\em Schr\"odinger potential} $q$ we mean a
symmetric matrix valued nodal function i.e. $q \in (\complex^{d\times
d})^V$.

The $d-$dimensional {\em discrete gradient} is the linear
operator $\nabla :(\complex^d)^V \to (\complex^d)^E$ defined for some $u
\in (\complex^d)^V$ by 
\[ 
 (\nabla u)(\{i,j\}) = u(i)-u(j), ~\{i,j\} \in E. 
\]
The discrete gradient assumes an edge orientation that
is fixed a priori and that is irrelevant in the remainder of this paper.

The {\em weighted graph Laplacian} is the linear map
$L_\sigma : (\complex^d)^V \to (\complex^d)^V$ defined by
\begin{equation}\label{eq:wgl}
L_\sigma = \nabla^T \diag(\sigma) \nabla,
\end{equation}
where we used the linear operator $\diag(\sigma) : (\complex^d)^E \to
(\complex^d)^E$, which is defined for some $v \in (\complex^d)^E$ by
$(\diag(\sigma) v)(e) = \sigma(e) v(e)$, $e \in E$. Its matrix
representation is a block diagonal matrix with the $\sigma(e)$ on its
diagonal. The operator $\nabla^T : (\complex^d)^E \to (\complex^d)^V$ is
the adjoint of the $d$-dimensional discrete gradient $\nabla$.

The discrete {\em Schr\"odinger
operator} associated with a conductivity $\sigma$ and a Schr\"odinger
potential $q$ is a block diagonal perturbation (with
blocks of size $d \times d$) of the weighted graph Laplacian, i.e
\[
 L_\sigma + \diag(q).
\]

\begin{example}
\label{ex:cyl}
We show how to use the matrix valued conductivities and Schr\"odinger
potentials to view the Laplacian of a cylindrical graph $C = P_k \times
G$ with scalar weights as a matrix valued Schr\"odinger operator on
the graph $P_k$, a path with $k$ nodes with vertices $V(P_k) =
\{1,\ldots,k\}$ and edges $E(P_k) = \{\{1,2\},\ldots,\{k-1,k\}\}$. Here
$\times$ denotes the Cartesian product between graphs. Such cylindrical
graphs arise e.g. in a finite difference discretization of the
conductivity equation on a rectangle with a Cartesian grid, as illustrated
in \cref{fig:cyl}. 

Let $s \in (0,\infty)^{E(C)}$ be a scalar conductivity on the
cylindrical graph $C$. We view $s$ as a vector and split it into the
sub-vectors $s^{j} \in
(0,\infty)^{E(G)}$, $j=1,\ldots,k$ and $s^{j,j+1} \in
(0,\infty)^{V(G)}$, $j=1,\ldots,k-1$.  The sub-vector $s^{j}$ represents the
scalar conductivity of the $j$-th copy of the graph $G$. The sub-vector
$s^{j,j+1}$ corresponds to the conductivity linking layer $j$ to layer
$j+1$. Define the matrix valued conductivity $\sigma \in (\real^{|V(G)|
\times |V(G)|})^{E(P_k)}$ by $\sigma(\{j,j+1\}) = \diag s^{j,j+1}$,
$j=1,\ldots,k-1$ and matrix valued Schr\"odinger potential $q \in (\real^{|V(G)|
\times |V(G)|})^{V(P_k)}$ by $q(j) = L_{s^j}$, i.e. the Laplacian of the
graph induced by the vertices in the $j-$th copy of $G$,
$j=1,\ldots,k$. Then with an appropriate ordering of the vertices we have
\[
 L_\sigma + \diag(q) = L_s.
\]

\end{example}

\begin{figure}
 \begin{center}
 \includegraphics[width=0.7\textwidth]{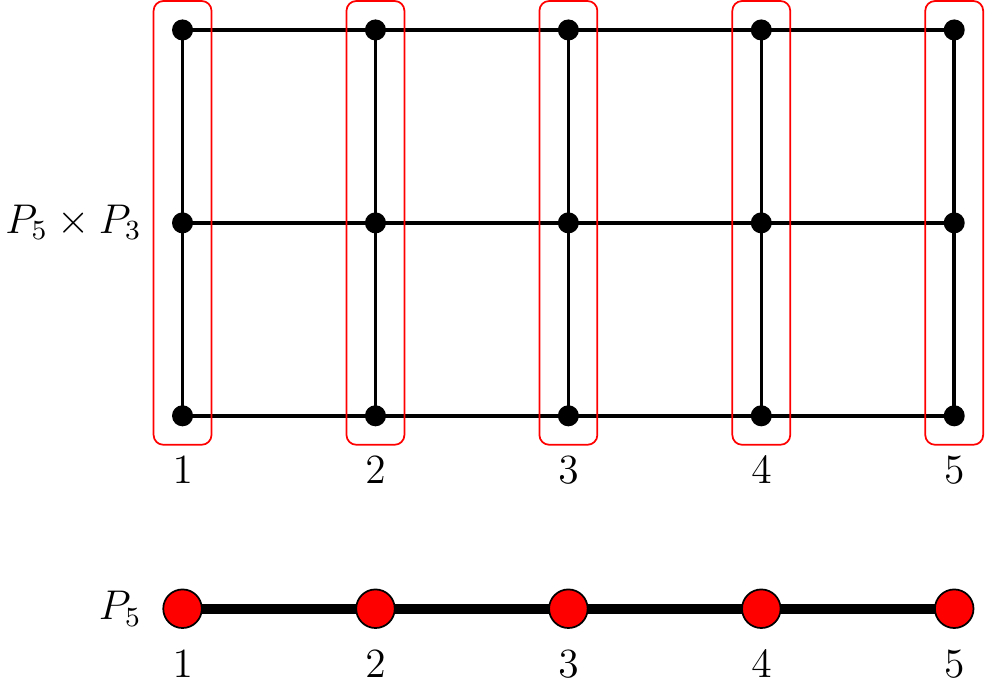}
 \end{center}
 \caption{The Laplacian for a scalar conductivity on the cylindrical
 graph $C \equiv P_5 \times P_3$ can be seen as a matrix valued
 Schr\"odinger operator on the graph $P_5$, as explained in
 \cref{ex:cyl}. To fix ideas, $s^4 \in \real^2$ represents the
 conductivities of $C$ within the $4-$th group in red and defines the
 matrix valued Schr\"odinger potential $q(4)$. The conductivity $s^{2,3}
 \in \real^3$ represents the conductivities of the 3 edges between the 2nd
 and 3rd group and is used to define the matrix valued conductivity
 $\sigma(\{2,3\})$.}
 \label{fig:cyl}
\end{figure}

\subsection{The Dirichlet problem}

For a conductivity $\sigma \in (\complex^{d \times d})^E$ and a
Schr\"odinger potential $q \in (\complex^{d \times d})^V$, the
$\sigma,q$ Dirichlet problem consists in finding $u \in (\complex^d)^V$
satisfying
\begin{equation}\label{eq:dir}
 \left\{
 \begin{aligned}
  ((L_\sigma + \diag(q))u)_I &=0,~\text{and}\\
  u_B &=g,
 \end{aligned}
 \right.
\end{equation}
where $g \in (\complex^d)^B$ is the Dirichlet boundary condition. The
Dirichlet to Neumann map, when it exists, is the linear mapping
$\Lambda_{\sigma,q} : (\complex^d)^B \to (\complex^d)^B$ defined by
\begin{equation}\label{eq:dtn}
 \Lambda_{\sigma,q} g = ((L_\sigma + \diag(q)) u)_B,
\end{equation}
where $u$ solves the Dirichlet problem \cref{eq:dir} with boundary
condition $u_B = g \in (\complex^d)^B$. The Dirichlet to Neumann map is
well defined e.g. when the solution to the Dirichlet
problem is uniquely determined by the boundary condition.\footnote{In
\cref{sec:sigpsd} we consider Dirichlet problems that do not admit a
unique solution and yet the Dirichlet to Neumann map is well defined.} Conditions
guaranteeing Dirichlet problem uniqueness are given in the next
theorem.
\begin{theorem}\label{thm:dir:pd}
The $\sigma, q$ Dirichlet problem on a connected graph with connected
interior admits a unique solution when $\sigma \in (\complex^{d\times
d})^E$ and $q \in (\complex^{d \times d})^V$ are symmetric and one of
the two following conditions is satisfied.
\begin{enumerate}[(i)]
 \item $\sigma' \succ 0$ and $q_I' \succ -
 \lambda_{\min}\M{(L_{\sigma'})_{II}}$.
 \item $q_I' \succ 0$ and $(L_{\sigma'})_{II} \succ -
 \lambda_{\min}(\diag(q_I'))$.
\end{enumerate}
\end{theorem}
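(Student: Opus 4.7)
My plan is to reduce unique solvability of the Dirichlet problem to the invertibility of a single complex symmetric matrix, and then to prove invertibility by showing its real part is positive definite under either hypothesis.

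First I would eliminate the boundary values. Since $\diag(q)$ is block-diagonal across the nodes, its $IB$ block vanishes, and the interior equation of \eqref{eq:dir}, together with $u_B = g$, collapses to
\[
 M\, u_I = -(L_\sigma)_{IB}\, g, \qquad M := (L_\sigma)_{II} + \diag(q_I).
\]
Existence and uniqueness for every boundary datum $g$ is thus equivalent to $M$ being invertible. Because each $\sigma(e)$ and each $q(v)$ is symmetric, the factorization $L_\sigma = \nabla^T \diag(\sigma) \nabla$ shows that $L_\sigma$ and $\diag(q)$ are complex symmetric, and so are their $II$ blocks; in particular $M^T = M$.

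Next I would invoke the observation that a complex symmetric matrix $M = M' + \jmath M''$ whose real part $M' = \Re M$ is (real symmetric) positive definite is automatically invertible: for any nonzero $x \in \complex^{d|I|}$, both $x^* M' x$ and $x^* M'' x$ are real, so $\Re(x^* M x) = x^* M' x > 0$, which forces $M x \neq 0$. Note this uses symmetry, not Hermiticity: for a complex symmetric $M$, the Hermitian part $\tfrac{1}{2}(M+M^*)$ coincides with $\Re M$. Since $\Re M = (L_{\sigma'})_{II} + \diag(q_I')$, it remains to verify positive definiteness of this real matrix under each hypothesis.

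Under (i), an energy identity gives $(L_{\sigma'})_{II} \succ 0$: extending $x \in \real^{d|I|}$ by zero on $B$ to $x_\star$,
\[
 x^T (L_{\sigma'})_{II} x = \sum_{\{i,j\}\in E} (x_\star(i)-x_\star(j))^T \sigma'(\{i,j\}) (x_\star(i)-x_\star(j)),
\]
and $\sigma'(e) \succ 0$ together with connectedness of $G$ and $B\neq\emptyset$ forces $x_\star\equiv 0$ when the sum vanishes. Setting $\alpha := \lambda_{\min}((L_{\sigma'})_{II}) > 0$, the hypothesis $q_I' \succ -\alpha$ is the componentwise statement $\diag(q_I') + \alpha I \succ 0$, and the split
\[
 \Re M = \big[(L_{\sigma'})_{II} - \alpha I\big] + \big[\diag(q_I') + \alpha I\big]
\]
exhibits $\Re M$ as a sum of a PSD and a PD matrix, hence PD. Under (ii) the roles swap: with $\beta := \lambda_{\min}(\diag(q_I')) > 0$, the hypothesis $(L_{\sigma'})_{II} \succ -\beta I$ combines with $\diag(q_I') - \beta I \succeq 0$ to give the analogous decomposition and positive definiteness, this time without any sign assumption on $\sigma'$.

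No step is individually difficult; the main subtlety is the second one, namely using that for complex symmetric (as opposed to Hermitian) matrices the Hermitian part of the quadratic form is exactly the real part of $M$, which is what lets a real spectral bound control complex invertibility. The matrix-valued graph Laplacian calculation is then the straightforward block analogue of the scalar case.
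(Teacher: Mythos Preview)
Your proof is correct and follows the same overall route as the paper: reduce unique solvability to invertibility of $M=(L_\sigma)_{II}+\diag(q_I)$, use that for a complex symmetric $M$ one has $\Re(x^*Mx)=x^*(\Re M)x$ so $\Re M\succ 0$ forces invertibility (this is exactly the paper's \cref{lem:fov}), and then check $(L_{\sigma'})_{II}+\diag(q_I')\succ 0$ under each hypothesis via the obvious eigenvalue split. The one place you diverge is in establishing $(L_{\sigma'})_{II}\succ 0$ under (i): the paper writes $(L_{\sigma'})_{II}=L_{\sigma'_I}+\diag(f)$ with $f$ collecting the boundary-coupling terms and then uses the connected-\emph{interior} hypothesis (via a Korn inequality and \cref{lem:const}), whereas your extension-by-zero argument is more direct and actually only needs connectedness of $G$ together with $B\neq\emptyset$.
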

In the previous theorem, $\lambda_{\min}(A)$ denotes the smallest
eigenvalue of a real symmetric matrix $A$. When any of the
conditions from \cref{thm:dir:pd} hold, the
Dirichlet to Neumann map can be written as
\begin{equation}
\label{eq:dtnmap}
 \Lambda_{\sigma,q} = L_{BB} + \diag(q_B) - L_{BI}(L_{II} + \diag(q_I))^{-1} L_{IB},
\end{equation}
where we dropped the subscript $\sigma$ in the blocks $(L_\sigma)_{BB}$,
$\ldots$ for  clarity.

Unfortunately \cref{thm:dir:pd} and the expression
\cref{eq:dtnmap} of the Dirichlet to Neumann map do not apply to one
the main applications of our results: static
spring networks. As we see in more detail in \cref{sec:springnets}, the 
linearization of Hooke's law we use allows for non-physical {\em floppy
modes}, i.e.  non-zero displacements that can be made with zero forces.
A generalization of the static spring network problem is to consider
symmetric conductivities with $\sigma \succeq 0$. In this
situation, floppy modes may also arise if there are edges $e$ for which
$\sigma(e)$ has a non-trivial nullspace. They can be defined as follows.

\begin{definition}\label{def:floppy} 
A non-zero $z \in \complex^V$, is said to be a {\em floppy
mode} for a symmetric conductivity $\sigma \in (\complex^{d\times d})^E$
with $\sigma \succeq 0$ if $z$ solves the equation
\begin{equation}\label{eq:floppy:def}
\left\{
\begin{aligned}
 (L_\sigma z)_I &=0,\\
 z_B & =0.
\end{aligned}
\right.
\end{equation}
\end{definition}

If $z$ is a floppy mode, then the solution to the $\sigma,0$ Dirichlet
problem cannot be unique. Indeed if $u$ is a solution to the $\sigma,0$
Dirichlet problem, them so is $u + \alpha z$ for any scalar $\alpha$.
The following theorem shows that even in the degenerate case $\sigma
\succeq 0$, $q=0$, there are situations where the Dirichlet problem
admits a solution that is unique up to floppy modes.
\begin{theorem}\label{thm:dir:psd}
The $\sigma, 0$ Dirichlet problem on a connected graph with connected
interior and $\sigma(e) \neq 0$ for all $e \in E$, admits a unique
solution up to floppy modes when any of the two following conditions
hold.
\begin{enumerate}[(i)]
 \item $\sigma' \succeq 0$ and $\sigma''=0$.

 \item $\sigma' \succeq 0$, and for each $e \in E$, $\sigma''(e)$ commutes with $\sigma'(e)$
 with nullspaces satisfying the inclusion $\nullspace(\sigma'(e)) \subset \nullspace(\sigma''(e))$.

\end{enumerate}
\end{theorem}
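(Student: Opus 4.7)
The uniqueness-up-to-floppy-modes portion is immediate: if $u_1$ and $u_2$ both solve the $\sigma,0$ Dirichlet problem with the same data $g$, then $v = u_1 - u_2$ satisfies $v_B = 0$ and $(L_\sigma v)_I = 0$, which are exactly the defining equations \eqref{eq:floppy:def} of a floppy mode, so $v$ is either zero or a floppy mode. The substantive content is therefore \emph{existence}. Writing the interior equation of \eqref{eq:dir} (with $q=0$) as $(L_\sigma)_{II} u_I = -(L_\sigma)_{IB} g$, a solution $u_I$ exists iff $(L_\sigma)_{IB} g \in \range((L_\sigma)_{II})$. Since $\sigma(e)^T = \sigma(e)$ makes $L_\sigma$ complex symmetric, this is equivalent to the bilinear orthogonality condition
\[
 y_I^T (L_\sigma)_{IB} g = 0 \quad \text{for every } y_I \in \nullspace((L_\sigma)_{II}).
\]

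\textbf{Main step.} Take such a $y_I$ and extend by zero, setting $y_B = 0$. Since $(L_\sigma y)_I = (L_\sigma)_{II} y_I = 0$ and $y_B = 0$, the Hermitian form satisfies $y^* L_\sigma y = \bar y_B^T (L_\sigma y)_B + \bar y_I^T (L_\sigma y)_I = 0$. Expanding via $L_\sigma = \nabla^T \diag(\sigma) \nabla$ gives $\sum_{e\in E}(\nabla y)(e)^* \sigma(e) (\nabla y)(e) = 0$. Taking real parts leaves
\[
 \sum_{e \in E} (\nabla y)(e)^* \sigma'(e) (\nabla y)(e) = 0,
\]
and since $\sigma'(e)\succeq 0$ every term is nonnegative, so each vanishes; the standard PSD square-root argument then forces $\sigma'(e)(\nabla y)(e) = 0$, i.e.\ $(\nabla y)(e) \in \nullspace(\sigma'(e))$ for every edge $e$.

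\textbf{Finishing.} Under condition (i) we have $\sigma(e) = \sigma'(e)$, so $\sigma(e)(\nabla y)(e) = 0$ already. Under condition (ii) the assumed inclusion $\nullspace(\sigma'(e)) \subset \nullspace(\sigma''(e))$ also gives $\sigma''(e)(\nabla y)(e) = 0$, hence again $\sigma(e)(\nabla y)(e) = 0$. Either way $L_\sigma y = \nabla^T \diag(\sigma)\nabla y = 0$ on all of $V$. In particular the $B$-block $(L_\sigma)_{BI} y_I + (L_\sigma)_{BB} y_B$ vanishes, and with $y_B = 0$ and the symmetry $(L_\sigma)_{BI} = (L_\sigma)_{IB}^T$ this reads $(L_\sigma)_{IB}^T y_I = 0$, yielding $y_I^T (L_\sigma)_{IB} g = 0$ for every $g$. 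This verifies the solvability condition and establishes existence.

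\textbf{Anticipated obstacle.} The delicate step is passing from ``$(\nabla y)(e)$ is annihilated by $\sigma'(e)$'' to ``it is annihilated by the full complex $\sigma(e)$''; this is precisely what conditions (i) and (ii) are engineered to guarantee, and without the nullspace inclusion one would have real--imaginary crosstalk that breaks the argument. The commutativity hypothesis in (ii) is not strictly needed for this deduction but clarifies the linear-algebraic picture (simultaneous orthogonal diagonalization of $\sigma'(e)$ and $\sigma''(e)$). A secondary subtlety is that $\sigma$ is complex symmetric rather than Hermitian, so one must work with the Hermitian form $y^* L_\sigma y$ (not $y^T L_\sigma y$) to bring the positive semidefiniteness of $\sigma'$ to bear.
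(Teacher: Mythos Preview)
Your argument is correct. Both you and the paper reduce existence to the range inclusion $\range((L_\sigma)_{IB}) \subset \range((L_\sigma)_{II})$, proved via the dual statement $\nullspace((L_\sigma)_{II}) \subset \nullspace((L_\sigma)_{BI})$, and both obtain this by extending a null vector $y_I$ by zero on $B$ and exploiting $y^* L_\sigma y = 0$. The execution differs, however. The paper splits into real and complex cases: for (i) it decomposes $(L_\sigma)_{II} = L_{\sigma_I} + \diag(f)$ into an interior Laplacian plus a boundary--coupling diagonal (\cref{lem:inclusion}), while for (ii) it passes through the Modified Korn inequality (\cref{lem:mkorn}) and the floppy-mode characterization (\cref{lem:floppy:char}) to reach the simultaneous eigendecomposition $L_\sigma = \nabla^T \diag(x)\diag(\lambda)\diag(x)^T\nabla$, which is where the commutativity of $\sigma'(e)$ and $\sigma''(e)$ enters. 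Your route is more economical: you work directly with the global Hermitian form, extract the real part edge-by-edge to get $(\nabla y)(e)\in\nullspace(\sigma'(e))$, and then invoke only the nullspace inclusion from condition (ii) to conclude $\diag(\sigma)\nabla y = 0$. This handles (i) and (ii) in one stroke, avoids the intermediate lemmas, and, as you correctly note, shows that the commutativity hypothesis is not actually needed for the theorem as stated. The paper's scaffolding buys the explicit eigenvector description of floppy modes and the formula $\Lambda_{\sigma,0} = L_{BB} - L_{BI}Q(Q^TL_{II}Q)^{-1}Q^TL_{IB}$ with a $Q$ depending only on $x$, which are used downstream; your shortcut sacrifices that structural information but proves \cref{thm:dir:psd} itself more cleanly.
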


The next lemma shows that the Dirichlet to Neumann map is well defined
in the degenerate cases considered in \cref{thm:dir:psd}.
\begin{lemma}
\label{lem:dtn:psd}
Under the hypothesis of \cref{thm:dir:psd}, the Dirichlet to
Neumann map is 
\begin{equation}
 \label{eq:dtn:psd}
 \Lambda_{\sigma,0} = L_{BB} - L_{BI} Q (Q^T L_{II} Q)^{-1} Q^T L_{IB},
\end{equation}
where for clarity we dropped the subscript $\sigma$ in the blocks
$(L_\sigma)_{BB}$ etc. The matrix $Q$ is real with orthonormal columns
($Q^T Q = \identity$) and satisfies $\range(Q) = \range(L_{II})$.
Moreover $Q$ depends only on the eigenvectors of $\sigma'(e)$ associated
with non-zero eigenvalues, for $e \in E$.
\end{lemma}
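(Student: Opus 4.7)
The overall idea is to reduce the degenerate $\sigma,0$ Dirichlet problem to a non-degenerate one by eliminating the floppy-mode directions. Since the obstruction to unique solvability lives exactly in $\nullspace(L_{II})$, my first task is to pin that nullspace down. I will show that under either hypothesis of \cref{thm:dir:psd},
\[
 \nullspace(L_{\sigma,II}) \;=\; \nullspace(L_{\sigma',II}),
\]
so that the kernel is purely determined by the real PSD Laplacian $L_{\sigma',II}$. Case (i) is immediate. For case (ii), if $L_{\sigma,II} z_I = 0$ and $z = (0_B; z_I)$, then $\bar z_I^T L_{\sigma,II} z_I = 0$; using the commuting simultaneous diagonalization of $\sigma'(e)$ and $\sigma''(e)$ together with the nullspace inclusion, each edge contribution factors as $\sum_k (d_{e,k} + \jmath m_{e,k})|\xi_{e,k}|^2$ with $d_{e,k}>0$ whenever it is nonzero, so separating real and imaginary parts forces $P_e^T(\nabla z)(e) = 0$ for every edge, where $P_e$ collects the eigenvectors of $\sigma'(e)$ with nonzero eigenvalue. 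This is exactly $(\nabla z)(e)\in\nullspace(\sigma'(e))$, i.e.\ $z_I\in\nullspace(L_{\sigma',II})$, and the reverse inclusion follows from $\nullspace(\sigma'(e))\subset\nullspace(\sigma(e))$.

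With this identification in hand, I choose $Q$ to be a real matrix with orthonormal columns spanning $\range(L_{\sigma',II}) = \nullspace(L_{\sigma',II})^\perp$. Because $L_{\sigma',II}$ is real symmetric PSD, such $Q$ exists and depends only on the eigenvectors of the $\sigma'(e)$ associated with nonzero eigenvalues (they span $\nullspace(L_{\sigma',II})^\perp$ via the reduced-gradient description of the nullspace). To see that $\range(Q)=\range(L_{\sigma,II})$, I use that $L_{\sigma,II}^* = L_{\bar\sigma,II}$ also satisfies the hypotheses of \cref{thm:dir:psd}, so $\nullspace(L_{\sigma,II}^*)=\nullspace(L_{\sigma',II})$ and therefore, by the general identity $\range(A)=\nullspace(A^*)^{\perp_H}$,
\[
 \range(L_{\sigma,II}) = \nullspace(L_{\sigma',II})^{\perp_H} = \range(Q).
\]

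Next I verify that $Q^T L_{\sigma,II} Q$ is invertible. If $Q^T L_{\sigma,II} Q c = 0$, then $L_{\sigma,II}Qc \in \range(Q)^{\perp_H}=\nullspace(L_{\sigma',II})$, while $L_{\sigma,II}Qc\in\range(L_{\sigma,II})=\nullspace(L_{\sigma',II})^{\perp_H}$; Hermitian orthogonality gives $L_{\sigma,II}Qc=0$, so $Qc\in\nullspace(L_{\sigma,II})\cap\range(Q)=\{0\}$, and therefore $c=0$ because the columns of $Q$ are linearly independent. The Dirichlet problem $L_{\sigma,II} u_I = -L_{IB}g$ is solvable: for $z_I\in\nullspace(L_{\sigma',II})$, the extension $(0_B;z_I)$ is a floppy mode, hence $L_{BI}z_I=0$ and so $z_I^T L_{IB} = 0$, which places $-L_{IB}g$ in $\nullspace(L_{\sigma',II})^{\perp_H}=\range(L_{\sigma,II})$.

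Finally, write any solution as $u_I = Qc + \tilde z$ with $\tilde z$ a floppy (interior) component. Left-multiplying $L_{\sigma,II}Qc = -L_{IB}g$ by $Q^T$ and using invertibility gives $c = -(Q^T L_{\sigma,II}Q)^{-1} Q^T L_{IB}g$. The Neumann data is $(Lu)_B = L_{BB}g + L_{BI}(Qc + \tilde z)$, and $L_{BI}\tilde z = 0$ since $\tilde z$ corresponds to a floppy mode; this confirms that $\Lambda_{\sigma,0}$ is well defined on boundary data despite the non-uniqueness of $u_I$, and substituting the expression for $c$ yields the stated formula \eqref{eq:dtn:psd}. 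The main technical obstacle is the nullspace identification in case (ii), which is where the commuting/inclusion hypothesis is essential; everything after that is linear algebra built on the clean real-PSD structure of $L_{\sigma',II}$.
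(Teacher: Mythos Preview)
Your proof is correct and follows essentially the same route as the paper: identify $\nullspace(L_{\sigma,II})=\nullspace(L_{\sigma',II})$ and $\range(L_{\sigma,II})=\range(L_{\sigma',II})$, show floppy modes produce zero boundary data so that $L_{IB}g\in\range(L_{\sigma,II})$ and $L_{BI}\tilde z=0$, and then solve on $\range(Q)$ to obtain \eqref{eq:dtn:psd}. The paper packages these steps into separate lemmas (\cref{lem:psd:complex}, \cref{lem:floppy:char}, \cref{lem:floppy:zerob}) and passes through the Moore--Penrose pseudoinverse $(L_{II})^\dagger = Q(Q^TL_{II}Q)^{-1}Q^T$, whereas you inline the same arguments and verify invertibility of $Q^TL_{II}Q$ directly, but the substance is identical.
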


The proofs of \cref{thm:dir:pd,thm:dir:psd,lem:dtn:psd} are
deferred to \cref{sec:diruni}.

\begin{remark}[Discrete Dirichlet principle]
For real $\sigma \succeq 0$ and $q \succeq 0$, it is easy to show that
the Dirichlet problem \eqref{eq:dir} is equivalent to finding $u \in
(\real^d)^V$ minimizing the energy
\begin{equation}
 \begin{aligned}
 E(u) = u^T (L_\sigma + \diag(q)) u =& 
   \sum_{\{i,j\} \in E} (u(i) - u(j))^T \sigma(\{i,j\}) (u(i)-u(j))\\
 &+ \sum_{k \in V} u(k)^T q(k) u(k),
 \end{aligned}
\label{eq:energy}
\end{equation}
subject to $u_B = g$. The function $E(u)$ is the energy needed to
maintain a potential $u$ in the network and is the sum of energies
associated to each edge and node.  The edge terms are akin to the
current-voltage product to calculate the power dissipated by a two
terminal electrical component. The nodal terms represent the energy leaked by
an electrical component  linking the node to the ground (zero
potential).  The conditions $\sigma\succeq 0$, $q \succeq 0$ guarantee $E(u)$ is a
convex quadratic function in $u$. The first equality in the Dirichlet
problem \eqref{eq:dir} identical to $\nabla_{u_I} E(u) = 0$.
\end{remark}

\subsection{Relating boundary and interior quantities}
\label{sec:interior}
The following lemma is a straightforward generalization to complex
matrix valued conductivities and Schr\"odinger potentials of the
interior identities \cite[Lemmas 5.1 and 6.1]{Boyer:2015:SDC}, which are
in turn inspired by the continuum interior identities used by Sylvester
and Uhlmann \cite{Sylvester:1987:GUT} to prove uniqueness for the
continuum conductivity and Schr\"odinger problems.

\begin{lemma}[Boundary/Interior Identity]
\label{lem:interior}
Let $\sigma_1, \sigma_2 \in (\complex^{d \times d})^E$ be conductivities
and $q_1, q_2 \in (\complex^{d \times d})^V$ be Schr\"odinger
potentials.  Let $u_1, u_2 \in (\complex^d)^V$ be solutions to the
$\sigma_1,q_1$ and $\sigma_2,q_2$ Dirichlet problems:
\[
\left\{
\begin{aligned}
 ((L_{\sigma_1} + \diag(q_1)) u_1)_I &=0,\\ 
 (u_1)_B &= g_1,
\end{aligned}
\right.
\quad\text{and}\quad
\left\{
\begin{aligned}
 ((L_{\sigma_2} + \diag(q_2)) u_2)_I &= 0,\\
 (u_2)_B &= g_2,
\end{aligned}
\right.
\]
for some boundary conditions $g_1, g_2 \in (\complex^d)^B$. Then if the Dirichlet to Neumann maps
$\Lambda_{\sigma_i,q_i}$, $i=1,2$, are well defined we have the identities
\[
\begin{aligned}
 g_2^T(\Lambda_{\sigma_1,q_1} - \Lambda_{\sigma_2,q_2})g_1 
 &= u_2^T ( L_{\sigma_1} - L_{\sigma_2} ) u_1 
  + u_2^T\diag(q_1 -q_2) u_1\\
 &=\sum_{e\in E} [(\nabla u_2)(e)]^T [(\sigma_1 -
 \sigma_2)(e)] [(\nabla u_1)(e)]\\
 &+ \sum_{i \in V} [u_2(i)]^T [(q_1 - q_2)(i)] [u_1(i)]\\
 &= \vect( (Du_2) \outter (Du_1))^T \vect(\sigma_1 - \sigma_2)\\
 &+ \vect(u_2 \outter u_1) ^T \vect(q_1 - q_2), \\
 \end{aligned}
\]
where the outer product $\outter$ is as in \eqref{eq:outer}.
\end{lemma}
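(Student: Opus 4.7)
The plan is to reduce all three equivalent forms of the identity to one common scalar expression and then manipulate it by standard linear-algebraic bookkeeping. Set $M_i := L_{\sigma_i} + \diag(q_i)$ for $i=1,2$. Because $\sigma_i$ and $q_i$ are (complex) symmetric, $M_i^T = M_i$; here I use that $L_{\sigma_i} = \nabla^T \diag(\sigma_i) \nabla$ is symmetric whenever $\sigma_i$ is, and that $\diag(q_i)$ is block-symmetric whenever $q_i$ is.

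First I would establish the first equality by computing $u_2^T (M_1 - M_2) u_1$ using the partition $V = B \cup I$. For the $M_1$ piece, the interior equation in the Dirichlet problem for $u_1$ gives $(M_1 u_1)_I = 0$, while the definition \eqref{eq:dtn} of the Dirichlet to Neumann map gives $(M_1 u_1)_B = \Lambda_{\sigma_1, q_1} g_1$. Combined with $(u_2)_B = g_2$, this yields $u_2^T M_1 u_1 = g_2^T \Lambda_{\sigma_1, q_1} g_1$. For the $M_2$ piece, symmetry $M_2^T = M_2$ lets me shift the matrix to the left factor, so $u_2^T M_2 u_1 = (M_2 u_2)^T u_1$, and the same block decomposition applied to the Dirichlet problem for $u_2$ gives $u_2^T M_2 u_1 = (\Lambda_{\sigma_2, q_2} g_2)^T g_1 = g_2^T \Lambda_{\sigma_2, q_2} g_1$. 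The last step needs symmetry $\Lambda_{\sigma_2, q_2}^T = \Lambda_{\sigma_2, q_2}$, which I would read off from the closed form \eqref{eq:dtnmap} (or \eqref{eq:dtn:psd} in the degenerate case) using $L_{IB}^T = L_{BI}$ and the symmetry of the interior matrix (or its inverse restricted to $\range(Q)$).

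Next I would expand $L_{\sigma_i} = \nabla^T \diag(\sigma_i) \nabla$ and $\diag(q_i)$ inside $u_2^T (M_1 - M_2) u_1$. The $L$ contribution becomes $(\nabla u_2)^T \diag(\sigma_1 - \sigma_2)(\nabla u_1)$, and since $\diag(\sigma_1 - \sigma_2)$ is block-diagonal indexed by $E$, this splits into the edge sum $\sum_{e \in E} [(\nabla u_2)(e)]^T [(\sigma_1 - \sigma_2)(e)][(\nabla u_1)(e)]$. The nodal term likewise splits over $V$. This is the second form of the identity.

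Finally, each scalar $a^T M b$ appearing in these sums can be rewritten as an inner product of vectorized rank-one matrices using $a^T M b = \trace(M b a^T) = \vect(a b^T)^T \vect(M)$. By the definition \eqref{eq:outer} of the block-wise outer product, $(\nabla u_2 \outter \nabla u_1)(e) = (\nabla u_2)(e) (\nabla u_1)(e)^T$ and $(u_2 \outter u_1)(i) = u_2(i) u_1(i)^T$, so assembling the edge sum into $\vect(\nabla u_2 \outter \nabla u_1)^T \vect(\sigma_1 - \sigma_2)$ and similarly for the node sum produces the third form. The only nontrivial point in the argument is the symmetry of the Dirichlet to Neumann maps needed to align the two boundary terms in the first equality; everything else is unpacking definitions.
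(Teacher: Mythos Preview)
Your proof is correct and follows essentially the same route as the paper: both compute $u_2^T M_1 u_1$ and $u_2^T M_2 u_1$ separately via the block decomposition $V = B \cup I$, subtract, and then unpack the Laplacian and $\diag(q)$ terms using $x^T A y = \vect(xy^T)^T \vect(A)$. The paper hides the second computation behind the word ``Similarly,'' whereas you are explicit that it requires the symmetry of $M_2$ and of $\Lambda_{\sigma_2,q_2}$; that extra care is warranted, since the standing assumption that $\sigma_i$ and $q_i$ are symmetric is what makes the step go through.
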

\begin{proof}
Since $u_1$ solves the $\sigma_1,q_1$ Dirichlet problem we have
\begin{equation}
 \begin{aligned}
 u_2^T L_{\sigma_1} u_1 +  u_2^T \diag(q_1) u_1 &=
 (u_2)_B^T ((L_{\sigma_1} + \diag(q_1))u_1)_B
 + (u_2)_I^T ((L_{\sigma_1} + \diag(q_1)) u_1)_I\\
 &= (u_2)_B^T ((L_{\sigma_1} + \diag(q_1))u_1)_B\\
 &= g_2^T
\Lambda_{\sigma_1,q_1} g_1.
 \end{aligned}
 \label{eq:green1}
\end{equation}
Similarly, we have that 
\begin{equation}
u_2^TL_{\sigma_2} u_1 + u_2^T \diag(q_2) u_1 = g_2^T\Lambda_{\sigma_2,q_2} g_1.
\label{eq:green2}
\end{equation}
Subtracting \cref{eq:green2} from \cref{eq:green1} gives the first
equality. To obtain the second equality, use the definition of the
weighted graph Laplacian to see that
\[
 u_2^T L_{\sigma_i} u_1 = \sum_{e\in E} [ (Du_2)(e)
 ]^T\sigma_i(e)(Du_1)(e), ~i=1,2.
\]
By applying for each $e\in E$ the identity $x^T A y = \vect(xy^T)^T
\vect(A)$, which holds for any $x,y \in \complex^d$ and $A \in
\complex^{d\times d}$, we get
\begin{equation}
 u_2^T (L_{\sigma_1} - L_{\sigma_2}) u_1 =
\sum_{e\in E} \vect( [(Du_2)(e)] [(Du_1)(e)]^T )^T \vect((\sigma_1 -
\sigma_2)(e)).
\label{eq:vect1}
\end{equation}
By applying the same identity for all nodes $i \in V$ we get
\begin{equation}
 u_2^T \diag(q_1 - q_2) u_1 = \sum_{i \in V} \vect([u_2(i)] [u_1(i)]^T)^T
 \vect((q_1-q_2)(i)).
 \label{eq:vect2}
\end{equation}
The third equality follows from identities \cref{eq:vect1} and \cref{eq:vect2}.
\end{proof}

\section{Dirichlet problem uniqueness proofs}
\label{sec:diruni}

We first focus on cases where the solution to the $\sigma,q$ Dirichlet
problem is unique, either because $\sigma' \succ 0$ (\cref{sec:sigpd})
or because $q_I' \succ 0$ (\cref{sec:qpd}). In both cases the objective
is to show that the conditions given in \cref{thm:dir:pd} are
sufficient to guarantee that the matrix $(L_\sigma)_{II} + \diag(q_I)$ is
invertible.  The case where $\sigma' \succeq 0$, $q=0$ is dealt with in
\cref{sec:sigpsd}, and is more delicate because the matrix
$(L_\sigma)_{II} + \diag(q_I)$ is no longer invertible. However it is
still possible to show that the $\sigma,0$ Dirichlet solution is unique up
to floppy modes (\cref{def:floppy}).

\subsection{Conductivities with
positive definite real part} \label{sec:sigpd}

The goal of this section is to show that $(L_\sigma)_{II} + \diag(q_I)$ is
invertible when $\sigma' \succ 0$ and $q_I' \succ -\zeta$, for some
$\zeta>0$ to be determined and depending on $\sigma'$. To achieve this
we need two intermediary results on the discrete graph Laplacian
$L_\sigma$ with real matrix valued symmetric conductivity $\sigma \succ
0$.  The first one is a discrete version of the first Korn inequality
(\cref{lem:korn}). The second is to show that a vector potential $u
\in \nullspace(L_\sigma)$ must be constant on all connected
components of the graph $G$ (\cref{lem:const}).  Using these
properties, we can show that when $\sigma$ is a real conductivity with
$\sigma \succ 0$, we have $(L_\sigma)_{II} \succ 0$. This establishes
uniqueness for the $\sigma,0$ Dirichlet problem for real $\sigma$ with
$\sigma \succ 0$.  The extension to complex conductivities and non-zero
Schr\"odinger potentials (\cref{lem:spd}) follows from studying the
field of values (see e.g. \cite{Horn:2013:MA}) of the sum of a symmetric
positive definite real matrix and a purely imaginary symmetric matrix
(\cref{lem:fov}).

The following is a discrete version of the first Korn inequality which 
bounds the elastic energy stored in a body from below by the gradient of
the strain, see e.g. \cite[\S 1.12]{Marsden:1994:MFE}.
\begin{lemma}[Discrete Korn inequality]
\label{lem:korn}
Let $\sigma \in (\real^{d\times d})^E$ be a conductivity with $\sigma
\succ 0$.  Then there is a constant $C>0$ such that for any $u
\in (\real^d)^V$,
\begin{equation}
\| \nabla u\|^2 \leq C u^T L_\sigma u.
\end{equation}
\end{lemma}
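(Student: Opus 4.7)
The plan is to reduce the inequality to an edge-by-edge Rayleigh quotient bound on the positive definite matrices $\sigma(e)$. The starting point is the definition of the weighted graph Laplacian in \eqref{eq:wgl}, which, after expanding $\diag(\sigma)$ block by block, gives the quadratic form
\[
 u^T L_\sigma u \;=\; (\nabla u)^T \diag(\sigma) (\nabla u) \;=\; \sum_{e \in E} [(\nabla u)(e)]^T \sigma(e) [(\nabla u)(e)].
\]
This rewriting makes the desired inequality look like a lower bound of the right-hand side by a multiple of $\sum_e \|(\nabla u)(e)\|^2 = \|\nabla u\|^2$.

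Next, I would invoke the componentwise positivity hypothesis $\sigma \succ 0$, which by definition means $\sigma(e) \succ 0$ in $\real^{d \times d}$ for every $e \in E$. For each such $e$, the standard Rayleigh quotient bound for symmetric positive definite matrices gives
\[
 [(\nabla u)(e)]^T \sigma(e) [(\nabla u)(e)] \;\geq\; \lambda_{\min}(\sigma(e))\,\|(\nabla u)(e)\|^2,
\]
with $\lambda_{\min}(\sigma(e)) > 0$.

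Finally, I would exploit the finiteness of $E$: set
\[
 \lambda_0 \;:=\; \min_{e \in E} \lambda_{\min}(\sigma(e)) \;>\; 0,
\]
which exists as the minimum of finitely many strictly positive numbers. Summing the edge-wise Rayleigh bound over $e \in E$ then yields $u^T L_\sigma u \geq \lambda_0 \|\nabla u\|^2$, so the conclusion holds with $C = 1/\lambda_0$, which depends on $\sigma$ but not on $u$.

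There is no real obstacle here: the only mild subtlety is to make sure the minimum over edges is attained (hence strictly positive), which is immediate because $E$ is finite. The hypothesis $\sigma \succ 0$ componentwise is exactly what is needed — if we merely had $\sigma \succeq 0$, the minimum eigenvalue could vanish and the inequality would degenerate, which is consistent with the appearance of floppy modes discussed in the degenerate case treated later.
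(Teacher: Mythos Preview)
Your proposal is correct and is essentially identical to the paper's proof: expand $u^T L_\sigma u$ as a sum over edges, apply the Rayleigh quotient lower bound $\lambda_{\min}(\sigma(e))$ on each edge, take the minimum over the finite edge set, and conclude with $C = 1/\lambda_0$. The paper even uses the same constant (writing $\lambda_* = \min_{e\in E}\lambda_{\min}(\sigma(e)) = \lambda_{\min}(\diag(\sigma))$).
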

\begin{proof}
By using Rayleigh quotients,
\[
  v ^T \sigma(e) v \geq \lambda_{\min}(\sigma(e)) \|v\|^2 ~\text{for all}~ v\in
  \real^d ~\text{and}~e \in E.
\]
Define $\lambda_* = \min_{e\in E} \lambda_{\min}(\sigma(e)) =
\lambda_{\min}(\diag(\sigma))$. Clearly $\sigma
\succ 0$ implies $\lambda_* > 0$. The inequality we seek follows with $
C = \lambda_*^{-1}$ from
\[
 u ^T L_\sigma u =  \sum_{e \in E} [(\nabla u)(e)]^T [\sigma(e)]
 [(\nabla u)(e)] \geq \lambda_* \sum_{e \in E} \|(\nabla u)(e)\|^2 =
 \lambda_* \| \nabla u\|^2.
\]
\end{proof}

The next lemma extends to matrix valued conductivities a well known
characterization of the nullspace of (scalar) weighted graph
Laplacians (see e.g. \cite{Chung:1997:SGT}).

\begin{lemma}[Nullspace of graph Laplacian]
\label{lem:const}
For real $\sigma \succ 0$, $u \in \nullspace(L_\sigma)$ implies that $\nabla u = 0$.
In particular if the graph is connected then $u$ is constant, meaning
there is a constant $c \in \real^d$ such that $u(i) = c$ for all $i\in V$.
\end{lemma}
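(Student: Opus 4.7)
The plan is to reduce this to the discrete Korn inequality (\cref{lem:korn}) via a standard quadratic-form argument, then use a path-connectivity argument on the graph.

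First I would observe that if $u \in \nullspace(L_\sigma)$ then the scalar quantity $u^T L_\sigma u$ vanishes trivially. Expanding this using the definition $L_\sigma = \nabla^T \diag(\sigma) \nabla$ yields
\[
  0 = u^T L_\sigma u = \sum_{e \in E}[(\nabla u)(e)]^T \sigma(e) (\nabla u)(e).
\]
Because $\sigma \succ 0$, each term in this sum is nonnegative, so one could already conclude $(\nabla u)(e) = 0$ for every $e$ directly. Alternatively, and more in line with the structure of the paper, I would invoke \cref{lem:korn} to get $\|\nabla u\|^2 \leq C\, u^T L_\sigma u = 0$, which forces $\nabla u = 0$.

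For the second statement, I would argue that $\nabla u = 0$ means $u(i) = u(j)$ for every edge $\{i,j\} \in E$. Fix any reference node $i_0 \in V$ and set $c = u(i_0) \in \real^d$. For any other node $i \in V$, connectedness of the graph provides a path $i_0 = v_0, v_1, \ldots, v_k = i$ with $\{v_{\ell-1}, v_\ell\} \in E$. Applying the edge equality along each successive edge gives $u(i) = u(v_{k-1}) = \cdots = u(v_0) = c$, so $u$ is constant on $V$.

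No step here is particularly hard; the whole content is packaged in \cref{lem:korn}, so the main conceptual point is just noticing that the matrix-valued setting reduces to the scalar case once one recognizes that $u^T L_\sigma u$ is a sum of nonnegative block quadratic forms. I would keep the proof short, stating the quadratic form computation, applying Korn, and then running the elementary path argument to extend the local equality across a connected component.
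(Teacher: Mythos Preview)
Your proposal is correct and matches the paper's proof essentially line for line: the paper also observes $u^T L_\sigma u = 0$, invokes \cref{lem:korn} to conclude $\nabla u = 0$, and then uses connectedness to deduce $u$ is constant. Your explicit path argument is slightly more detailed than the paper's one-line ``constant on connected components'' remark, but the content is identical.
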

\begin{proof}
If $u \in \nullspace(L_\sigma)$ then $u ^T L_\sigma u = 0$. Using the discrete
Korn inequality (\cref{lem:korn}), we get $\nabla u = 0$. This
means that for any edge $\{i,j\} \in E$, we must have $u(i) = u(j)$.
Therefore $u$ must be constant on connected components of the graph.
\end{proof}

We can now prove the first uniqueness result for the Dirichlet problem.
\begin{lemma}[Uniqueness for real positive definite
conductivities]\label{lem:dir} Assume both the graph $G$ and its
subgraph induced by the interior nodes are connected. For real
conductivities $\sigma$ with $\sigma \succ 0$, the matrix
$(L_\sigma)_{II}$ is invertible and the $\sigma,0$ Dirichlet problem
admits a unique solution.
\end{lemma}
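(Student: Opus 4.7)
The strategy is to show $(L_\sigma)_{II}$ is injective; since it is a square matrix, injectivity implies invertibility, and then existence and uniqueness of the Dirichlet problem follow immediately (the solution is $u_I = -(L_\sigma)_{II}^{-1} (L_\sigma)_{IB} g$ and $u_B = g$).

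To prove injectivity, I would take an arbitrary $v \in (\real^d)^I$ with $(L_\sigma)_{II} v = 0$ and extend it to $u \in (\real^d)^V$ by setting $u_I = v$ and $u_B = 0$. Because $u_B$ vanishes, the quadratic form collapses to $u^T L_\sigma u = v^T (L_\sigma)_{II} v = 0$. Since $\sigma \succ 0$ and $\sigma$ is real, $L_\sigma = \nabla^T \diag(\sigma) \nabla$ is positive semidefinite, so this identity is the equality case.

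Now I invoke the discrete Korn inequality (\cref{lem:korn}), which yields $\|\nabla u\|^2 \leq C\, u^T L_\sigma u = 0$, hence $\nabla u = 0$. By \cref{lem:const}, $u$ must be constant on every connected component of $G$. Since $G$ is connected by assumption, $u$ is a single constant vector on $V$; but $u_B = 0$ and $B$ is nonempty, so $u \equiv 0$ and in particular $v = 0$. This shows $(L_\sigma)_{II}$ has trivial nullspace, completing the proof of invertibility.

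The main obstacle is essentially already handled by the two preliminary lemmas (Korn and nullspace characterization), so the remaining work is just the bookkeeping of extending an interior vector by zero on the boundary and chaining the implications. The connectedness of the interior subgraph is not actually needed in this particular lemma; it is used elsewhere in \cref{thm:dir:pd}, but here the global connectedness of $G$ combined with nonemptiness of $B$ is enough to force $u \equiv 0$.
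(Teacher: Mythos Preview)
Your argument is correct and is in fact a cleaner route than the one in the paper. The paper proceeds by decomposing
\[
 (L_\sigma)_{II} = L_{\sigma_I} + \diag(f),
\]
where $L_{\sigma_I}$ is the Laplacian of the interior-induced subgraph and $f(i)$ collects the conductivities of edges from interior node $i$ to boundary nodes. It then uses positive semidefiniteness of both summands to split $v^T(L_\sigma)_{II}v=0$ into two vanishing pieces, applies \cref{lem:const} on the \emph{interior} subgraph (this is where connectedness of the interior is used) to get $v$ constant there, and finally uses $\diag(f)$ to force that constant to be zero. Your approach instead extends $v$ by zero to all of $V$ and applies \cref{lem:korn} and \cref{lem:const} on the full graph $G$, so you only need $G$ connected and $B\neq\varnothing$. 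You are right that the interior-connectedness hypothesis is not actually needed for this lemma under your argument; the paper's decomposition, however, is reused later (e.g.\ in \cref{lem:inclusion}), which may explain why that route was taken.
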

\begin{proof}
Our goal here is to show that $(L_\sigma)_{II} \succ 0$ which implies invertibility and
therefore uniqueness for the $\sigma,0$ Dirichlet problem. By definition
of the weighted graph Laplacian \cref{eq:wgl}, the matrix $L_\sigma$
must be real and symmetric. Moreover using the discrete Korn
inequality (\cref{lem:korn}), there is a constant
$C > 0$ such that for all $u \in (\real^d)^V$:
\[
u^T L_\sigma u \geq C \|\nabla u\|^2.
\]
This implies $L_\sigma \succeq 0$ and hence $(L_\sigma)_{II} \succeq 0$. Now we can write
\[
 (L_\sigma)_{II} = L_{\sigma_I} + \diag(f),
\]
where $L_{\sigma_I}$ is the weighted graph Laplacian on the subgraph of
$G$ induced by the interior nodes $I$ and $f \in (\real^{d\times
d})^I$ is given for $i \in I$ by
\begin{equation}
\label{eq:fi}
f(i) = \sum_{\{i, j\} \in E, i \in I, j \in B} 
\sigma(\{i, j\}).
\end{equation}
Since the sum of positive definite matrices is positive definite,
$\sigma \succ 0$ implies $f(i) \succ 0$ for all nodes $i\in I$ that are
connected via an edge to some boundary node and $f(i) = 0$ otherwise.
This guarantees that $\diag(f) \succeq 0$.

Now take $v \in (\real^d)^I$ with $v^T (L_{\sigma})_{II} v =0$.
Since both $L_{\sigma_I}$ and $\diag(f)$ are positive semidefinite, we
must have that 
\[
\begin{aligned}
 \text{(a)} & ~ v^T L_{\sigma_I} v = 0
 ~\text{and}~\\
 \text{(b)} &~ v^T \diag(f) v = 0.
\end{aligned}
\]
By using (a) and \cref{lem:const} on the subgraph
induced by the interior nodes (which is connected by assumption), we get
that $v$ is constant, i.e.  $v(i)=v(j)$ for any $i,j \in V$. By using
(b), we get that $v(i)^T \sigma(\{i, j\})  v(i) = 0$ for all $\{i, j\}
\in E$ where $i \in I$ and $j \in B$. Hence there must be at least one
$i \in I$ such that $v(i)= 0$ (since $G$ is connected). Since the
subgraph of $G$ induced by the interior nodes is connected, we conclude
that $v=0$. This gives the desired result $(L_\sigma)_{II}\succ 0$.
\end{proof}

The following lemma allows us to extend the uniqueness result from
\cref{lem:dir} to complex conductivities and Schr\"odinger
potentials.
\begin{lemma}
\label{lem:fov}
Let $A,B \in \real^{n \times n}$ be symmetric with $A \succ 0$. Then the
matrix $M = A + \jmath B$ is invertible.
\end{lemma}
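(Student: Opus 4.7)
The plan is to show that the only $x \in \complex^n$ with $Mx = 0$ is $x = 0$, from which invertibility follows since $M$ is square. I would prove this by computing the quadratic form $x^* M x$ and separating its real and imaginary parts.

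First I would observe that for any $x \in \complex^n$,
\[
 x^* M x = x^* A x + \jmath\, x^* B x,
\]
and that because $A$ and $B$ are real and symmetric (hence Hermitian when viewed over $\complex$), both $x^* A x$ and $x^* B x$ are real. Consequently, if $Mx = 0$ then $x^* M x = 0$, and by separating real and imaginary parts we get $x^* A x = 0$ (the imaginary part $x^* B x = 0$ will not be needed).

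Next I would write $x = x' + \jmath x''$ with $x', x'' \in \real^n$ and expand
\[
 x^* A x = (x')^T A x' + (x'')^T A x'' + \jmath\bigl[(x')^T A x'' - (x'')^T A x'\bigr].
\]
The bracketed term vanishes because $A = A^T$, so $x^* A x = (x')^T A x' + (x'')^T A x''$. Since $A \succ 0$ on $\real^n$, both summands are nonnegative and their sum is zero only when $x' = 0$ and $x'' = 0$, i.e. $x = 0$.

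There is no real obstacle here; the only subtle point is noting that the symmetry of $A$ (rather than Hermitian-ness of $M$ itself, which fails when $B \neq 0$) is what makes $x^* A x$ split cleanly into the sum of two real quadratic forms. This same argument, applied block by block, is precisely what will later let us extend \cref{lem:dir} from real positive definite conductivities to complex conductivities with $\sigma' \succ 0$ and to Schr\"odinger perturbations satisfying the hypotheses of \cref{thm:dir:pd}.
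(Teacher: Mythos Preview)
Your proof is correct and uses essentially the same idea as the paper's: both arguments hinge on the observation that $\Re(x^*Mx) = x^*Ax > 0$ for every nonzero $x \in \complex^n$. The paper packages this as a statement about the field of values $F(M)$ lying strictly in the right half-plane (and then invokes the inclusion of the spectrum in $F(M)$), whereas you go directly from $Mx=0 \Rightarrow x^*Mx=0 \Rightarrow x^*Ax=0 \Rightarrow x=0$. Your route is a bit more self-contained, since it avoids appealing to the spectrum-in-numerical-range fact and instead spells out why $x^*Ax = (x')^TAx' + (x'')^TAx''$; the paper's route, on the other hand, yields the slightly stronger byproduct that all eigenvalues of $M$ have positive real part.
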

\begin{proof}
The field of values (or numerical range, see e.g. \cite{Horn:2013:MA})
of $M \in \complex^{n \times n}$ is the complex plane region given by
\[
F(M) = \Mcb{ v^*M v ~|~ v \in \complex^n,~\|v\|=1}.
\]
Since $A \succ 0$ we have $v^* A v >0$ for $v \neq 0$. Since $B$ is real
symmetric, $v^*B v$ must be real. Therefore $\Re(v^* M v) = v^*Av > 0$
for $v\neq 0$ and the
field of values $F(M)$ lies on the right hand complex plane, excluding
the imaginary axis. Since the spectrum of $M$ is contained in $F(M)$
this means that $0$ is not an eigenvalue of $M$ and that $M$ is
invertible.
\end{proof}

We are now ready to show that the condition (i) in
\cref{thm:dir:pd} is sufficient for having a unique solution to
the $\sigma,q$ Dirichlet problem.
\begin{lemma}
\label{lem:spd}
Let $G$ be a connected graph with connected subgraph induced by the
interior nodes. Let $\sigma \in (\complex^{d\times d})^E$ be a
conductivity with $\sigma' \succ 0$ and $q \in (\complex^{d\times d})^V$
be a Schr\"odinger potential with $q_I' \succ
-\lambda_{\min}\M{(L_{\sigma'})_{II}}$. Then the matrix $(L_\sigma)_{II}
+ \diag(q_I)$ is invertible and the $\sigma,q$ Dirichlet problem admits a
unique solution.
\end{lemma}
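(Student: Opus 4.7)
The plan is to reduce the invertibility of $M := (L_\sigma)_{II} + \diag(q_I)$ to the field-of-values argument of \cref{lem:fov} by exhibiting $M$ as the sum of a real symmetric positive definite matrix and a purely imaginary symmetric matrix. First I would split
\[
 M = \bigl[(L_{\sigma'})_{II} + \diag(q_I')\bigr] + \jmath \bigl[(L_{\sigma''})_{II} + \diag(q_I'')\bigr],
\]
using that $L_\sigma = L_{\sigma'} + \jmath L_{\sigma''}$ by linearity of $\nabla^T \diag(\cdot) \nabla$ in its weight, and that $\diag$ is likewise linear. Both summands are real symmetric: $\sigma', \sigma''$ are symmetric matrix-valued edge weights, so the associated weighted graph Laplacians are real symmetric, and the blocks $q_I', q_I''$ inherit symmetry from $q$.

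Next I would show that the real part $A := (L_{\sigma'})_{II} + \diag(q_I')$ is positive definite. Since $\sigma' \succ 0$, \cref{lem:dir} applied to the real conductivity $\sigma'$ gives $(L_{\sigma'})_{II} \succ 0$; in particular $\lambda_{\min}((L_{\sigma'})_{II}) > 0$. The hypothesis $q_I' \succ -\lambda_{\min}((L_{\sigma'})_{II})$ means, componentwise, that $q_I'(i) + \lambda_{\min}((L_{\sigma'})_{II})\identity \succ 0$ for every $i \in I$, so
\[
 \lambda_{\min}(\diag(q_I')) \;=\; \min_{i \in I}\lambda_{\min}(q_I'(i)) \;>\; -\lambda_{\min}((L_{\sigma'})_{II}).
\]
Weyl's inequality for sums of Hermitian matrices then gives
\[
 \lambda_{\min}(A) \;\geq\; \lambda_{\min}((L_{\sigma'})_{II}) + \lambda_{\min}(\diag(q_I')) \;>\; 0,
\]
so $A \succ 0$.

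Finally, the imaginary part $B := (L_{\sigma''})_{II} + \diag(q_I'')$ is real symmetric, so $M = A + \jmath B$ satisfies the hypothesis of \cref{lem:fov} and is therefore invertible. Consequently the reduced linear system $((L_\sigma)_{II} + \diag(q_I)) u_I = -(L_\sigma)_{IB} g$ arising from eliminating the boundary constraint in \eqref{eq:dir} has a unique solution $u_I$ for every $g$, which proves uniqueness of the $\sigma, q$ Dirichlet problem.

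I do not anticipate a real obstacle here: the content lives entirely in the real part, where the only mild subtlety is that the assumption is stated as a componentwise generalized inequality on $q_I'$ and must be upgraded to a lower bound on $\lambda_{\min}(\diag(q_I'))$ before Weyl's inequality can close the estimate. The complex part is handled by a direct appeal to \cref{lem:fov}.
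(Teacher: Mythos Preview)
Your proposal is correct and follows essentially the same route as the paper: split $M$ into real and imaginary parts, invoke \cref{lem:dir} to get $(L_{\sigma'})_{II}\succ 0$, combine with the hypothesis on $q_I'$ to obtain $A\succ 0$, and finish with \cref{lem:fov}. The paper states the implication $(L_{\sigma'})_{II}+\diag(q_I')\succ 0$ in one line, whereas you spell it out via Weyl's inequality; otherwise the arguments coincide.
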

\begin{proof}
 By \cref{lem:dir} and because $\sigma' \succ 0$, we have that
 $(L_{\sigma'})_{II} \succ 0$. Since we assume $q_I' \succ
-\lambda_{\min}\M{(L_{\sigma'})_{II}}$, we must have $(L_{\sigma'})_{II} +
\diag(q_I') \succ 0$. We can now use \cref{lem:fov} with $A \equiv
(L_{\sigma'})_{II} + \diag(q_I')$ and $B \equiv (L_{\sigma''})_{II} +
\diag(q_I'')$ to conclude that $(L_\sigma)_{II} + \diag(q_I)$ is invertible.
Uniqueness follows from the definition of the $\sigma, q$ Dirichlet problem.
\end{proof}

\subsection{Schr\"odinger potentials
with positive definite real part}\label{sec:qpd}
The main result of this section is the following lemma, which shows that
$q_I' \succ 0$  and a condition on the smallest eigenvalue of
$(L_{\sigma'})_{II}$ (i.e. condition (ii) in \cref{thm:dir:pd})
guarantees uniqueness for the $\sigma,q$-Dirichlet problem.
\begin{lemma}
\label{lem:qpd}
Let $G$ be a connected graph with connected subgraph induced by the
interior nodes. Let  $q \in (\complex^{d \times d})^V$ be a Schr\"odinger
potential with $q_I' \succ 0$ and $\sigma \in (\complex^{d\times d})^E$ be a
conductivity with $(L_{\sigma'})_{II} \succ -\lambda_{\min}(\diag(q_I'))$. Then
the matrix $(L_\sigma)_{II} + \diag(q_I)$ is invertible and the $\sigma,q$
Dirichlet problem admits a unique solution.
\end{lemma}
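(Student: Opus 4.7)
The plan is to adapt the argument of \cref{lem:spd} by swapping the roles of the conductivity and the Schr\"odinger potential: there, strict positivity came from $\sigma'$ through the discrete Korn inequality; here, it comes directly from the nodal term $\diag(q_I')$, while $(L_{\sigma'})_{II}$ is only required to be not too negative.

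First, I would record the structural observations that do not depend on sign hypotheses. Since $\sigma$ is complex symmetric, its real part $\sigma'$ is a real symmetric matrix valued function on the edges, so $L_{\sigma'}$ is real symmetric by the definition \eqref{eq:wgl}, and hence so is the principal block $(L_{\sigma'})_{II}$. Its smallest eigenvalue $\lambda_{\min}((L_{\sigma'})_{II})$ is therefore a real number, which makes the hypothesis $(L_{\sigma'})_{II} \succ -\lambda_{\min}(\diag(q_I'))$ meaningful. Likewise $(L_{\sigma''})_{II}$ and $\diag(q_I'')$ are real symmetric, so decomposing
\[
 (L_\sigma)_{II} + \diag(q_I) = A + \jmath B,
\]
with
\[
 A = (L_{\sigma'})_{II} + \diag(q_I'), \qquad B = (L_{\sigma''})_{II} + \diag(q_I''),
\]
places us squarely in the setting of \cref{lem:fov}, provided I can verify $A \succ 0$.

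Next, I would establish $A \succ 0$. The hypothesis $q_I' \succ 0$ (interpreted componentwise as in \cref{sec:notation}) implies that each block $q'(i)$ with $i \in I$ is positive definite, so $\diag(q_I')$ is a block diagonal symmetric positive definite matrix whose smallest eigenvalue $\mu \equiv \lambda_{\min}(\diag(q_I'))$ is strictly positive and equals $\min_{i \in I} \lambda_{\min}(q'(i))$. Consequently $\diag(q_I') \succeq \mu \identity$. Combining this with the hypothesis $(L_{\sigma'})_{II} \succ -\mu \identity$ and adding the two inequalities yields $A = (L_{\sigma'})_{II} + \diag(q_I') \succ 0$, as required. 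Note that here, unlike in \cref{lem:spd}, I make no use of connectivity or of the discrete Korn inequality: the strict positivity is supplied directly by the Schr\"odinger potential, and $(L_{\sigma'})_{II}$ need not be positive semidefinite.

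Finally, I would invoke \cref{lem:fov} with this choice of $A$ and $B$ to conclude that $(L_\sigma)_{II} + \diag(q_I) = A + \jmath B$ is invertible. Uniqueness of the $\sigma,q$ Dirichlet problem is then immediate from \eqref{eq:dir}: given the boundary data $g$, the interior values $u_I$ are determined by the invertible linear system $((L_\sigma)_{II} + \diag(q_I)) u_I = -(L_\sigma)_{IB} g$. There is no real obstacle in this argument; the only subtlety worth flagging is the componentwise reading of $q_I' \succ 0$ and the identification $\lambda_{\min}(\diag(q_I')) = \min_{i \in I} \lambda_{\min}(q'(i))$, which is what makes the hypothesis in (ii) of \cref{thm:dir:pd} combine cleanly with the spectral bound on $(L_{\sigma'})_{II}$.
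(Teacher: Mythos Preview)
Your proposal is correct and follows essentially the same route as the paper: decompose $(L_\sigma)_{II} + \diag(q_I)$ into its real and imaginary parts, use the hypotheses to verify $(L_{\sigma'})_{II} + \diag(q_I') \succ 0$, and then invoke \cref{lem:fov}. Your write-up is in fact more explicit than the paper's, which simply asserts the positive-definiteness of the real part ``by the hypothesis'' and immediately applies \cref{lem:fov}; your observation that connectivity plays no role in this argument is also accurate.
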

\begin{proof}
By the hypothesis, we have that $(L_{\sigma'})_{II} + \diag(q_I') \succ 0$.
Hence we can use \cref{lem:fov} with $A \equiv (L_{\sigma'})_{II} +
\diag(q_I')$ and $B \equiv (L_{\sigma''})_{II} + \diag(q_I'')$ to conclude
that $(L_\sigma)_{II} + \diag(q_I)$ is invertible and the desired result follows.
\end{proof}

\subsection{Conductivities with positive semidefinite real part and zero
Schr\"odinger potential}
\label{sec:sigpsd}
The purpose of this section is to prove \cref{thm:dir:psd}, which
deals with a situation that is not covered by 
\cref{lem:spd,lem:qpd} because the Schr\"odinger
potential $q=0$ and the conductivity $\sigma' \succeq 0$. The discrete
Korn inequality (\cref{lem:korn}) does not apply in this situation,
but can be easily modified to avoid floppy modes
(\cref{lem:mkorn}). We give a characterization of floppy modes
(\cref{lem:floppy:char}) that shows that floppy modes for real
$\sigma$ are entirely determined by the subspace $\nullspace(\diag(\sigma))$ and that they
do not affect the boundary data (\cref{lem:floppy:zerob}). This
allows us to prove uniqueness up to floppy modes
(\cref{thm:dir:psd}) and gives an expression for the Dirichlet to
Neumann map (\cref{lem:dtn:psd}). The generalization of these
results to complex conductivities satisfying condition (ii) in
\cref{thm:dir:psd} follows by noticing that this condition
ensures the fundamental subspaces of different blocks of $L_\sigma$ are
identical to those of $L_\sigma'$ (\cref{lem:psd:complex}).

The following results rely on the eigendecomposition of the matrices
$\sigma(e) \in \complex^{d \times d}$, $e \in E$. To simplify the
discussion, we assume that the conductivities of all edges have the same
rank $r \geq 1$, i.e. $\rank \sigma(e) = r$ for all $e \in E$. This
suffices for our application to elastodynamic networks where $r = 1$
(\cref{sec:springnets}).  The results of the present section can be
adapted to the case where the conductivities have a rank that may vary
with edge, as long as $\rank(\sigma(e)) \geq 1$ for all $e \in E$. Since
the conductivities we consider here satisfy condition (ii) in
\cref{thm:dir:psd}, the eigenvectors of $\sigma(e)$ are real and
we may define $x \in (\real^{d\times r})^E$ and $\lambda \in
(\complex^r)^E$ to write the eigendecomposition of each of the
conductivities i.e.
\begin{equation}
 \sigma(e) = [x(e)] [\diag(\lambda(e))] [x(e)]^T,~ e \in E,
 \label{eq:seigen}
\end{equation}
with $x(e)^Tx(e) = \identity$ being the $r \times r$ identity. By
hypothesis we must have $\lambda' > 0$. Condition (i) in
\cref{thm:dir:psd} means $\lambda''=0$, whereas condition (ii) imposes no
restriction on $\lambda''$.

\subsubsection{Real case}
The following is a slight generalization of the discrete Korn
inequality \cref{lem:korn}.
\begin{lemma}[Modified Discrete Korn Inequality]
\label{lem:mkorn}
Let $\sigma \in (\real^{d \times d})^E$ be a real conductivity with
$\sigma \succeq 0$. Then there exists constants $C_1, C_2 > 0$ such that
\begin{equation}
C_1 \|\diag(x)^T  \nabla u\|^2 \leq u^T L_{\sigma}u \leq C_2
\|\diag(x)^T \nabla u\|^2.
\end{equation}
for any $u \in (\real^d)^V$. Here $x \in (\real^{d \times r})^E$ is such
that $x(e)$ is an eigenvector matrix for the positive eigenvalues of
$\sigma(e)$ as in \cref{eq:seigen}.
\end{lemma}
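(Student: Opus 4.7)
The plan is to mimic the proof of the standard discrete Korn inequality (\cref{lem:korn}), but instead of bounding $\sigma(e)$ below by $\lambda_{\min}(\sigma(e))\identity$ (which is useless when $\sigma(e)$ is rank deficient), I would work in the coordinate system provided by the eigendecomposition in \eqref{eq:seigen} and only bound the action of $\sigma(e)$ on the subspace $\range(x(e))$ where it is actually positive.

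Concretely, for each edge $e \in E$ set $w(e) = x(e)^T (\nabla u)(e) \in \real^r$. Using $\sigma(e) = x(e) \diag(\lambda(e)) x(e)^T$ and $x(e)^T x(e) = \identity$, one expands
\[
 u^T L_\sigma u = \sum_{e \in E} (\nabla u)(e)^T \sigma(e) (\nabla u)(e) = \sum_{e \in E} w(e)^T \diag(\lambda(e)) w(e) = \sum_{e \in E} \sum_{k=1}^{r} \lambda_k(e)\, w_k(e)^2.
\]
Since $\sigma \succeq 0$ and $\rank \sigma(e) = r$ for every $e \in E$, the eigenvalues collected in $\lambda(e)$ are all strictly positive, and the finiteness of $E$ means the quantities
\[
 \lambda_* = \min_{e \in E}\, \lambda_{\min}(\diag(\lambda(e))), \qquad \lambda^* = \max_{e \in E}\, \lambda_{\max}(\diag(\lambda(e)))
\]
are well defined and satisfy $0 < \lambda_* \leq \lambda^* < \infty$. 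Rayleigh-quotient bounds on each term then give
\[
 \lambda_* \sum_{e \in E} \|w(e)\|^2 \;\leq\; u^T L_\sigma u \;\leq\; \lambda^* \sum_{e \in E} \|w(e)\|^2.
\]

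To finish, I would simply observe that $\|\diag(x)^T \nabla u\|^2 = \sum_{e \in E} \|x(e)^T (\nabla u)(e)\|^2 = \sum_{e \in E} \|w(e)\|^2$ by the definition of the block-diagonal operator $\diag(x)$ acting on $(\real^d)^E$, so the desired inequality holds with $C_1 = \lambda_*$ and $C_2 = \lambda^*$. No real obstacle arises here: the only thing to be careful about is the convention that $\lambda(e)$ lists only the positive eigenvalues (so $\lambda_* > 0$), which is exactly what the assumption $\rank \sigma(e) = r$ for every $e$ secures; this is what replaces the positive-definiteness hypothesis of \cref{lem:korn} and makes the quantity $\|\diag(x)^T \nabla u\|$, rather than $\|\nabla u\|$, the correct semi-norm on the left-hand side.
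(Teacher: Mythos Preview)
Your proof is correct and is essentially the same as the paper's: both use the eigendecomposition $\sigma(e)=x(e)\diag(\lambda(e))x(e)^T$ to reduce the quadratic form $(\nabla u)(e)^T\sigma(e)(\nabla u)(e)$ to one in the $r$-dimensional coordinates $w(e)=x(e)^T(\nabla u)(e)$, then bound by the extremal positive eigenvalues over all edges. The paper phrases this via the operator inequalities $\lambda_{\min'}(\sigma(e))\,x(e)x(e)^T\preceq\sigma(e)\preceq\lambda_{\max}(\sigma(e))\,x(e)x(e)^T$ (with $\lambda_{\min'}$ the smallest \emph{positive} eigenvalue), which is just a matrix-level repackaging of your coordinate computation and yields the identical constants $C_1=\lambda_*$, $C_2=\lambda^*$.
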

\begin{proof}
For the lower bound notice that 
\[
 \sigma(e) \succeq \lambda_{\min'}(\sigma(e)) [x(e)][x(e)]^T,~e\in E, 
\]
where for a matrix $A \succeq 0$, $\lambda_{\min'}(A)$ denotes the
{\em smallest positive} eigenvalue of $A$. Hence we get the lower bound:
\[
\begin{aligned}
u ^T L_{\sigma}u 
&=\sum_{e \in E} [(\nabla u)(e)]^T [\sigma(e)] [(\nabla u)(e)]\\
&\geq \sum_{e \in E} \lambda_{\min'}(\sigma(e)) [(\nabla u)(e)]^T
[x(e)][x(e)]^T
[(\nabla u)(e)]\\
&\geq \lambda_{\min'}(\diag(\sigma)) \sum_{e \in E} \| [x(e)]^T [(\nabla u)(e)] \|^2
 = \lambda_{\min'}(\diag(\sigma)) \|\diag(x)^T  \nabla u\|^2.
\end{aligned}
\]
The upper bound follows similarly from the bound
\[
\sigma(e) \preceq \lambda_{\max}(\sigma(e)) [x(e)] [x(e)]^T, ~e \in E.
\]
\end{proof}

We now give a characterization of the floppy modes, that shows these
modes depend only on the subspaces $\range(\sigma(e))$ (or equivalently
$\nullspace(\sigma(e))$), for $e \in E$.

\begin{lemma} 
\label{lem:floppy:char}
For real conductivities $\sigma\succeq 0$, the following are equivalent.
\begin{enumerate}[(i)] 
\item $z$ is a floppy mode (i.e. it satisfies \cref{eq:floppy:def})
\item $z$ is a non-zero solution to
\begin{equation}\label{eq:floppy2}
\left\{
\begin{aligned}
 \diag(x)^T \nabla z &= 0,\\
 z_B&= 0,
\end{aligned}
\right.
\end{equation}
where $x \in (\real^{d \times r})^E$ are the eigenvector matrices for
$\sigma$, as in \cref{eq:seigen}.
\item $z$ is such that $z_B = 0$ and $z_I \in \nullspace( (L_\sigma)_{II} )$.
\end{enumerate}
\end{lemma}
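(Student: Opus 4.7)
The plan is to establish the three-way equivalence by proving $\text{(i)} \Leftrightarrow \text{(iii)}$ and $\text{(iii)} \Leftrightarrow \text{(ii)}$. The principal tool is the modified discrete Korn inequality \cref{lem:mkorn} in its two-sided form, combined with the fact that $L_\sigma = \nabla^T \diag(\sigma) \nabla \succeq 0$ whenever $\sigma \succeq 0$.

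The equivalence $\text{(i)} \Leftrightarrow \text{(iii)}$ is bookkeeping. Using the block decomposition of $L_\sigma$ relative to $V = B \cup I$ and the boundary condition $z_B = 0$, I compute
\[
(L_\sigma z)_I = (L_\sigma)_{IB}\, z_B + (L_\sigma)_{II}\, z_I = (L_\sigma)_{II}\, z_I,
\]
so the interior equation in \cref{eq:floppy:def} is literally the same as $z_I \in \nullspace((L_\sigma)_{II})$.

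For $\text{(iii)} \Rightarrow \text{(ii)}$, the same block computation yields $z^T L_\sigma z = z_I^T (L_\sigma)_{II} z_I = 0$, and the lower Korn bound forces $\|\diag(x)^T \nabla z\|^2 \leq C_1^{-1}\, z^T L_\sigma z = 0$. Conversely, for $\text{(ii)} \Rightarrow \text{(iii)}$, the upper Korn bound gives $z^T L_\sigma z \leq C_2 \|\diag(x)^T \nabla z\|^2 = 0$; since $L_\sigma \succeq 0$, the vanishing of the associated quadratic form forces $L_\sigma z = 0$, and restricting to the interior rows (again using $z_B = 0$) gives $(L_\sigma)_{II} z_I = 0$.

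The only point requiring care is that $z$ is complex-valued while \cref{lem:mkorn} is stated for real vectors. Since $\sigma$, $L_\sigma$ and $x$ are all real under the hypothesis of this lemma, I will decompose $z = z' + \jmath z''$ and apply the Korn bounds separately to $z'$ and $z''$; each of the three conditions in the statement is $\real$-linear in $z$, so the splitting is compatible with the equivalences. I do not anticipate a real obstacle: the entire argument is a short consequence of the two-sided Korn inequality together with the block structure of $L_\sigma$ induced by the partition $V = B \cup I$.
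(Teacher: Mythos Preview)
Your proposal is correct and follows essentially the same approach as the paper: the equivalence with (ii) is obtained via the two-sided discrete Korn inequality (\cref{lem:mkorn}) together with the positive semidefiniteness of $L_\sigma$, and the equivalence $\text{(i)} \Leftrightarrow \text{(iii)}$ comes from the block structure under $z_B = 0$. Your argument for $\text{(i)} \Leftrightarrow \text{(iii)}$ via the direct identity $(L_\sigma z)_I = (L_\sigma)_{II} z_I$ is in fact slightly cleaner than the paper's, which passes through the quadratic form $z^T L_\sigma z = z_I^T (L_\sigma)_{II} z_I$ for both directions; your handling of complex $z$ by splitting into real and imaginary parts is also a nice touch that the paper leaves implicit.
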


\begin{proof}
\underline{(ii) $\implies$ (i)}. Assume that $z \neq 0$ satisfies
\cref{eq:floppy2}. Then clearly $\|\diag(x)^T \nabla z\|^2=0$ and by the
second inequality in \cref{lem:mkorn} we must have $z^T
L_\sigma z=0$. Since we have that
\[
 0 = z^T L_\sigma z = z_B ^T (L_\sigma z)_B+z_I ^T (L_\sigma
 z)_I,
\]
and $z_B = 0$, we also have that $z_I^T(L_\sigma z)_I=0$ and $z_I^T
(L_\sigma)_{II} z_I=0$.  Since $L_\sigma$ is symmetric we conclude that
$(L_\sigma)_{II}z_I=0$, i.e. $z$ is a floppy mode (by
\cref{def:floppy}).  

\underline{(i) $\implies$ (ii).} Now we assume that $z$ is a floppy mode,
i.e. it satisfies \cref{eq:floppy:def}. Clearly this leads to $z^T
 L_\sigma z  = 0$. By using the first inequality in \cref{lem:mkorn},
we conclude that $\diag(x)^T \nabla z =0$ and that $z$ satisfies
\cref{eq:floppy2}.

\underline{(i) $\iff$ (iii).} Since $z_B = 0$, we have $z^T L_\sigma z =
z_I^T(L_\sigma)_{II}z_I$. Thus $z$ satisfying \cref{eq:floppy:def} implies
$z_I^T(L_\sigma)_{II}z_I =0$ and thus $z_I \in \nullspace((L_\sigma)_{II})$.
Similarly if $z_I \in \nullspace((L_\sigma)_{II})$ and $z_B = 0$, then
$z^T L_\sigma z = 0$ and $L_\sigma z = 0$. It follows that
$(L_\sigma z)_I =0$.
\end{proof}

Next we continue with a technical result, which is a slight
generalization of the elastic network result \cite[Lemma
1]{Guevara:2011:CCS}.

\begin{lemma}\label{lem:inclusion}
Let $\sigma$ be a real conductivity with $\sigma \succeq 0$. Then we
have the inclusion 
\[
 \range((L_\sigma)_{IB}) \subset \range((L_\sigma)_{II}).
\]
\end{lemma}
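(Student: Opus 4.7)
The plan is to argue by orthogonality: since $\sigma$ is real with $\sigma \succeq 0$, the matrix $L_\sigma$ is real symmetric positive semidefinite, and so are its principal blocks; in particular $L_{II}$ is real symmetric with $\range((L_\sigma)_{II}) = \nullspace((L_\sigma)_{II})^\perp$, and $(L_\sigma)_{IB} = ((L_\sigma)_{BI})^T$. Thus the inclusion $\range((L_\sigma)_{IB}) \subset \range((L_\sigma)_{II})$ is equivalent to the nullspace inclusion $\nullspace((L_\sigma)_{II}) \subset \nullspace((L_\sigma)_{BI})$, which is what I would actually verify.

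To check this, I would pick $v \in \nullspace((L_\sigma)_{II})$ and form the extension $\tilde v \in (\real^d)^V$ with $\tilde v_I = v$ and $\tilde v_B = 0$. A direct computation gives
\[
\tilde v^T L_\sigma \tilde v = v^T (L_\sigma)_{II} v = 0.
\]
Because $L_\sigma$ is real symmetric and $L_\sigma \succeq 0$, the quadratic form $w \mapsto w^T L_\sigma w$ is a sum of squares, so $\tilde v^T L_\sigma \tilde v = 0$ forces $L_\sigma \tilde v = 0$. In particular $(L_\sigma \tilde v)_B = (L_\sigma)_{BI} v = 0$, so $v \in \nullspace((L_\sigma)_{BI})$, which is exactly what is needed. (Equivalently, $\tilde v$ is either zero or a floppy mode in the sense of Definition \ref{def:floppy}, so Lemma \ref{lem:floppy:char} applied to $\tilde v$ gives the same conclusion through the characterization (ii), but the sum-of-squares argument above is the most direct.)

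There is no real obstacle here; the one point requiring attention is that the result is stated for real $\sigma$, so one must not accidentally appeal to the more delicate complex or rank-deficient machinery. The clean separation is: $L_\sigma \succeq 0$ gives the sum-of-squares representation used above, which in turn yields the nullspace inclusion and hence the range inclusion by transposition. The argument does not need the connectedness assumptions or the discrete Korn inequality of Lemma \ref{lem:korn}; it uses only that $L_\sigma$ is a real symmetric positive semidefinite matrix.
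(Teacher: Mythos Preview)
Your proof is correct. Both you and the paper reduce the range inclusion to the nullspace inclusion $\nullspace((L_\sigma)_{II}) \subset \nullspace((L_\sigma)_{BI})$ via orthogonality, but the arguments diverge after that. The paper decomposes $(L_\sigma)_{II} = L_{\sigma_I} + \diag(f)$, where $L_{\sigma_I}$ is the Laplacian of the subgraph induced by $I$ and $f(i)$ collects the conductivities of edges from $i$ to the boundary; since both pieces are positive semidefinite, $z^T(L_\sigma)_{II}z=0$ forces $z(i)^T\sigma(\{i,j\})z(i)=0$ for every boundary-crossing edge, hence $\sigma(\{i,j\})z(i)=0$, and then $(L_\sigma)_{BI}z=0$ is read off from the explicit formula for that block. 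Your argument bypasses this edge-by-edge analysis entirely: you use only that the \emph{full} matrix $L_\sigma$ is real symmetric positive semidefinite, so $\tilde v^T L_\sigma \tilde v = 0$ already forces $L_\sigma\tilde v=0$. This is shorter and makes no reference to the graph structure --- it is really a statement about block PSD matrices --- whereas the paper's route is more explicit about \emph{which} terms kill $(L_\sigma)_{BI}z$ and reuses a decomposition that appears elsewhere (e.g.\ in the proof of \cref{lem:dir}). Your observation that neither connectedness nor the discrete Korn inequality is needed is correct, and applies to the paper's proof as well.
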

\begin{proof}
Since $L_\sigma$ is symmetric, it is equivalent to prove 
$\nullspace((L_\sigma)_{IB}) \supset \nullspace((L_\sigma)_{II})$.  Let $z \in
\nullspace((L_\sigma)_{II})$. The extension by zeros of $z$ to the boundary is a
floppy mode. Proceeding as in the proof of \cref{lem:dir}, we write
\[
 (L_\sigma)_{II} = L_{\sigma_I} + \diag(f),
\]
where $L_{\sigma_I}$ is the weighted graph Laplacian on the subgraph
induced by the boundary nodes and $f \in (\real^{d\times d})^I$ is given
as in \cref{eq:fi}. Since $L_{\sigma_I} \succeq 0$ and $\diag(f)\succeq
0$ we get that $z^T  (L_\sigma)_{II} z = 0$ implies $z^T
\diag(f) z  = 0$. Since $f(i)$ is a sum of positive semidefinite
matrices, we must have that 
\[
 z(i) ^T \sigma(\{i,j\}) z(i)= 0 ~\text{for
all}~ \{i,j\} \in E,~\text{with}~ i \in I, j \in B.
\]
Since conductivities
are symmetric, this means that $\sigma(\{i,j\}) z(i) = 0$ for all $\{i,j\}
\in E$, with $i \in I$ and $j \in B$. Now from the definition of the
Laplacian we have
\[
 ((L_\sigma)_{BI} z)(j) = \sum_{\{i,j\} \in E, i\in I} \sigma(\{i,j\})
 z(i) = 0,~\text{for $j \in B$}.
\]
This shows the desired result.
\end{proof}

We are now ready to prove the result of \cref{thm:dir:psd} in the
real case.
\begin{proof}[Proof of \cref{thm:dir:psd}, condition (i).]
Let us first assume condition (i) of \cref{thm:dir:psd} holds, i.e.
that $\sigma$ is real and $\sigma \succeq 0$. If $u$ is a solution to
the $\sigma,0$ Dirichlet problem with boundary condition $g \in
(\real^d)^B$, then $u_B = g$ and
\begin{equation}\label{eq:int}
 (L_\sigma)_{IB} g + (L_\sigma)_{II} u_I = 0.
\end{equation}
The inclusion of \cref{lem:inclusion} guarantees that equation
\cref{eq:int} admits a solution for all $g \in (\real^d)^B$. The
general solution to \cref{eq:int} may be written as
\begin{equation}\label{eq:pinv}
 u_I = - ((L_\sigma)_{II})^\dagger (L_\sigma)_{IB} g  + z,
\end{equation}
where $z \in \nullspace((L_\sigma)_{II})$ and the symbol $\dagger$ is the
Moore-Penrose pseudoinverse. 
\end{proof}

\subsubsection{Complex case}
Our objective is to prove \cref{thm:dir:psd} condition (ii) holds.
This can be done by proving the following lemma.

\begin{lemma}
\label{lem:psd:complex}
Let $\sigma \in (\complex^{d\times d})^E$ be a conductivity as in
condition (ii) in \cref{thm:dir:psd}. Then we have
\begin{enumerate}[(i)]
\item $\nullspace((L_\sigma)_{II})=\nullspace((L_{\sigma'})_{II})$,
\item $\nullspace((L_\sigma)_{II})\subset \nullspace((L_{\sigma})_{BI})$,
\item $\range((L_\sigma)_{II})=\range((L_{\sigma'})_{II})$,
\item $\range((L_\sigma)_{II})\supset \range((L_{\sigma})_{IB})$.
\end{enumerate}
\end{lemma}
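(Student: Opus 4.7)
My plan is to exploit the simultaneous diagonalization of $\sigma'(e)$ and $\sigma''(e)$ granted by condition (ii) in order to factor $L_\sigma$ cleanly, and then reduce every claim to a statement about a single real linear map.

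First, I would use the eigendecomposition \eqref{eq:seigen}: under condition (ii), both $\sigma'(e)$ and $\sigma''(e)$ are symmetric and commute, so they share a real orthonormal eigenbasis $x(e) \in \real^{d \times r}$ spanning $\range(\sigma'(e)) = \range(\sigma(e))$, with eigenvalues $\lambda(e) \in \complex^r$ satisfying $\lambda'(e) > 0$ (componentwise). Assembling $\tilde X = \diag(x) \in \real^{r|E| \times d|E|}$ and the diagonal matrix $\tilde \Lambda \in \complex^{r|E| \times r|E|}$ with blocks $\diag(\lambda(e))$, one obtains the factorization
\begin{equation*}
L_\sigma = M^T \tilde \Lambda M, \qquad M := \tilde X^T \nabla,
\end{equation*}
where $M$ is real and $\tilde \Lambda$ is invertible with $\tilde \Lambda' \succ 0$. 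Writing $M = [M_B, M_I]$ by boundary/interior column partition, this gives $(L_\sigma)_{II} = M_I^T \tilde \Lambda M_I$ and $(L_\sigma)_{IB} = M_I^T \tilde \Lambda M_B$, and analogous formulas for $L_{\sigma'}$ with $\tilde \Lambda'$ in place of $\tilde \Lambda$.

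The core step, which I expect to be the main technical point, is to prove $\nullspace((L_\sigma)_{II}) = \nullspace(M_I)$. The inclusion $\supset$ is immediate from the factorization. For $\subset$, take $z$ with $(L_\sigma)_{II} z = 0$, extend by zero to $\tilde z \in (\complex^d)^V$, and observe that $\tilde z^* L_\sigma \tilde z = \tilde z_I^* (L_\sigma)_{II} z = 0$. Setting $w := M \tilde z \in (\complex^r)^E$, the factorization gives
\begin{equation*}
\tilde z^* L_\sigma \tilde z = w^* \tilde \Lambda w = \sum_{e \in E} \sum_{k=1}^r \lambda_k(e)\, |w_k(e)|^2,
\end{equation*}
whose real part $\sum_{e,k} \lambda_k'(e)\, |w_k(e)|^2$ must vanish. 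Since every $\lambda_k'(e) > 0$, this forces $M\tilde z = 0$, i.e. $M_I z = 0$. The identical argument applied to $\sigma'$ (where $\tilde \Lambda'$ is already real and positive) shows $\nullspace((L_{\sigma'})_{II}) = \nullspace(M_I)$ as well, establishing (i).

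The remaining claims then follow quickly. For (ii), if $z \in \nullspace((L_\sigma)_{II})$ then $M\tilde z = 0$, so $L_\sigma \tilde z = M^T \tilde \Lambda M \tilde z = 0$, hence $(L_\sigma)_{BI} z = (L_\sigma \tilde z)_B = 0$. For (iii), both $(L_\sigma)_{II}$ and $(L_{\sigma'})_{II}$ have range contained in $\range(M_I^T)$; complex symmetry (row rank equals column rank) together with (i) and rank-nullity gives
\begin{equation*}
\dim \range((L_\sigma)_{II}) = d|I| - \dim \nullspace(M_I) = \rank(M_I) = \dim \range(M_I^T),
\end{equation*}
so $\range((L_\sigma)_{II}) = \range(M_I^T)$, and the same identity for $\sigma'$ yields equality. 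Finally, (iv) is immediate: $\range((L_\sigma)_{IB}) = \range(M_I^T \tilde \Lambda M_B) \subset \range(M_I^T) = \range((L_\sigma)_{II})$, since $\tilde \Lambda$ is invertible.
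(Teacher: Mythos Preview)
Your proof is correct. The factorization $L_\sigma = M^T\tilde\Lambda M$ with $M=\diag(x)^T\nabla$ real and $\tilde\Lambda$ diagonal with $\Re\tilde\Lambda\succ 0$ is exactly the decomposition the paper writes as \cref{eq:decomposition}, and your core step $\nullspace((L_\sigma)_{II})=\nullspace(M_I)$ via the energy identity and positivity of $\lambda'$ is the same mechanism the paper uses for (i) and (ii), only packaged a bit more explicitly (the paper routes the reverse inclusion through the floppy-mode characterization \cref{lem:floppy:char}, which amounts to the same thing).

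Where you diverge is in (iii) and (iv). The paper argues by duality: it observes that $\overline{\sigma}=\sigma'-\jmath\sigma''$ also satisfies condition (ii), applies (i) and (ii) to $\overline{\sigma}$, and then uses $\nullspace(A)^\perp=\range(A^*)$ together with the real symmetry of $\sigma',\sigma''$ to convert nullspace statements into the desired range statements. You instead stay with the factorization and use a rank count: $\range((L_\sigma)_{II})\subset\range(M_I^T)$ combined with $\dim\nullspace((L_\sigma)_{II})=\dim\nullspace(M_I)$ forces equality of ranges, and then $\range((L_\sigma)_{IB})\subset\range(M_I^T)$ is immediate. Your route is more self-contained and avoids the conjugation trick; the paper's route is slick in that (iii)--(iv) become formal consequences of (i)--(ii), but it requires the reader to check that condition (ii) is preserved under $\sigma\mapsto\overline{\sigma}$. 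Both are short and valid.
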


\begin{proof}

\underline{Proof of statement (i).} Let $z_I \in \nullspace((L_\sigma)_{II})$. Then
we have
\[
0 = z_I^* (L_\sigma)_{II} z_I = z_I^* (L_{\sigma'})_{II} z_I  +\jmath
z_I^*
(L_{\sigma''})_{II} z_I.
\]
Therefore $z_I \in \nullspace((L_{\sigma'})_{II})$. Now assume $z_I \in
\nullspace((L_{\sigma'})_{II})$. The extension $z$ of $z_I$ by zeroes on
$B$ must be a floppy mode and satisfies \cref{eq:floppy2}. Since
we can rewrite
\begin{equation}
 L_\sigma = \nabla^T \diag(x) \diag(\lambda' + \jmath \lambda'')
 \diag(x)^T \nabla,
 \label{eq:decomposition}
\end{equation}
it follows that $L_\sigma z = 0$ and that $z_I \in
\nullspace((L_\sigma)_{II})$.

\underline{Proof of statement (ii).} Let $z_I \in \nullspace((L_\sigma)_{II})$.
Similarly to the proof of (i), we have that the extension $z$ of $z_I$ by
zeroes on $B$ must satisfy $L_\sigma z =0$ and in particular
$(L_\sigma)_{BI} z_I = 0$.

\underline{Proof of statement (iii).} Apply statement (i) to the
conductivity $\overline{\sigma} = \sigma' -\jmath \sigma''$ and the
orthogonality of the fundamental subspaces of a complex matrix to get
$\range((L_{\overline{\sigma}})_{II}^*) =
\range((L_{\sigma'})_{II}^*)$. Using that $\sigma'$ and
$\sigma''$ are real symmetric this gives the desired result
$\range((L_\sigma)_{II}) = \range((L_{\sigma'})_{II})$.%

\underline{Proof of statement (iv).} Apply statement (ii) to the
conductivity $\overline{\sigma} = \sigma' -\jmath \sigma''$ and the
orthogonality of the fundamental subspaces of a complex matrix to get
$\range((L_{\overline{\sigma}})_{II}^*) \supset
\range((L_{\overline{\sigma}})_{BI}^*)$. Using that $\sigma'$ and
$\sigma''$ are symmetric this gives the desired result
$\range((L_\sigma)_{II})\supset \range((L_{\sigma})_{IB})$.%
\end{proof}

We can now complete the proof of \cref{thm:dir:psd}.

\begin{proof}[Proof of \cref{thm:dir:psd}, condition (ii).]
The proof follows as in the real case. The inclusion (iv) in
\cref{lem:psd:complex} implies that  the linear equation
\cref{eq:int} always has a solution regardless of the boundary
condition $g$. Solutions to the Dirichlet problem can be written with
the pseudoinverse as in \cref{eq:pinv}.
\end{proof}
\subsubsection{Dirichlet to Neumann map for rank deficient
conductivities}
\label{sec:dtn:pinv}
The following lemma shows that even if there are floppy modes, these do
not influence Neumann (or net current) measurements at the boundary. In
other words, floppy modes cannot be observed from boundary measurements.

\begin{lemma}
\label{lem:floppy:zerob}
Floppy modes correspond to zero boundary measurements.
\end{lemma}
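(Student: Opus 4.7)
The plan is to unpack ``zero boundary measurements'' as the statement that $(L_\sigma z)_B = 0$ whenever $z$ is a floppy mode. Once this is shown, adding a floppy mode to any Dirichlet solution leaves the Neumann data $(L_\sigma u)_B$ untouched, which is the content of the lemma and also what is needed for the expression \eqref{eq:dtn:psd} of $\Lambda_{\sigma,0}$ to be well-defined.

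First I would write out the block decomposition
\[
 (L_\sigma z)_B = (L_\sigma)_{BB} z_B + (L_\sigma)_{BI} z_I.
\]
Since a floppy mode satisfies $z_B = 0$ by \cref{def:floppy}, only the second term survives, so everything reduces to showing $(L_\sigma)_{BI} z_I = 0$. Also by \cref{def:floppy}, $z_I \in \nullspace((L_\sigma)_{II})$ (equivalently, statement (iii) of \cref{lem:floppy:char}), so it suffices to establish the inclusion
\[
 \nullspace((L_\sigma)_{II}) \subset \nullspace((L_\sigma)_{BI}).
\]

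Next I would invoke the inclusions already proved in the previous subsections to get this nullspace containment in both regimes covered by \cref{thm:dir:psd}. Under condition (i), $\sigma$ is real and \cref{lem:inclusion} gives $\range((L_\sigma)_{IB}) \subset \range((L_\sigma)_{II})$; taking orthogonal complements and using symmetry of $L_\sigma$ (so that $(L_\sigma)_{IB}^T = (L_\sigma)_{BI}$) yields the required containment of nullspaces. Under condition (ii), the complex case, statement (ii) of \cref{lem:psd:complex} gives the same inclusion directly. Combining the two cases with the block computation above finishes the argument.

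The proof is essentially a bookkeeping step riding on the fundamental-subspace inclusions already established, so there is no real obstacle; the only subtlety worth flagging is the passage from $\range((L_\sigma)_{IB}) \subset \range((L_\sigma)_{II})$ in the real case to the dual nullspace statement, which relies on $L_\sigma$ being symmetric. Once that is in hand, the lemma follows immediately and confirms that floppy modes sit in the kernel of the parameter-to-Neumann-data map, justifying the use of the pseudoinverse in \eqref{eq:dtn:psd}.
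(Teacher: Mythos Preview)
Your argument is correct, but it proceeds differently from the paper. The paper does not go through the block decomposition and the range/nullspace inclusions; instead it observes that $z_B=0$ and $(L_\sigma z)_I=0$ force the quadratic form $z^* L_\sigma z$ to vanish, and then uses the symmetry (and, implicitly, the positive semidefiniteness of the real part under the hypotheses of \cref{thm:dir:psd}) to upgrade $z^* L_\sigma z = 0$ to $L_\sigma z = 0$, which in particular gives $(L_\sigma z)_B = 0$. Your route is more modular---it simply cashes in \cref{lem:inclusion} and \cref{lem:psd:complex}(ii), which were proved precisely for this purpose---while the paper's route is a short self-contained quadratic-form computation. Both land on the same conclusion; yours makes the dependence on the earlier subspace lemmas explicit, and the paper's avoids re-invoking them at the cost of a slightly terse step in the complex case.
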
 
\begin{proof}
 We need to show that if $z \in (\complex^d)^V$ is a floppy mode, then
 we have zero fluxes at the boundary, i.e. $(L_\sigma z)_B = 0$. If $z$
 is a floppy mode then it satisfies \cref{eq:floppy:def}. In particular
 we have
 \[
  z^* L_\sigma z = z_B^* (L_\sigma z)_B + z_I^* (L_\sigma
  z)_I = 0,
 \]
 because $z_B=0$ and $(L_\sigma z)_I = 0$. Since $L_\sigma$ is a 
 symmetric matrix, we also have $L_\sigma z = 0$. This gives the desired
 result $(L_\sigma z)_B = 0$.
\end{proof}

We are now ready to show that the Dirichlet to Neumann map is well
defined for real conductivities $\sigma \succeq 0$ and $q = 0$, and by
extension to certain complex conductivities (\cref{lem:dtn:psd}).

\begin{proof}[Proof of \cref{lem:dtn:psd}]
Let $g \in (\complex^d)^B$ be a Dirichlet boundary condition for the
$\sigma,0$ Dirichlet problem. From the proof of
\cref{thm:dir:psd}, a solution $u \in (\complex^d)^V$ satisfies
$u_B = g$ and $u_I = -((L_\sigma)_{II})^\dagger (L_\sigma)_{IB} g + z$,
for some $z \in \nullspace((L_\sigma)_{II})$.  The fluxes at  the boundary
corresponding to such solution are:
\[
 (L_\sigma u)_B = (L_\sigma)_{BB} g - (L_\sigma)_{BI} ((L_\sigma)_{II})^\dagger
 (L_\sigma)_{IB} g + L_{BI} z.
\]
However the inclusion (ii) in \cref{lem:psd:complex} (or
\cref{lem:floppy:zerob}) guarantees that
$(L_\sigma)_{BI} z = 0$. Hence the
Dirichlet to Neumann map is uniquely defined and can be written as
\begin{equation}
\label{eq:dtnpinv}
\Lambda_{\sigma,0} = (L_\sigma)_{BB} - (L_\sigma)_{BI} ((L_\sigma)_{II})^\dagger
 (L_\sigma)_{IB}.
\end{equation}
Now let $Q$ be such that $Q^TQ = I$ and $\range(Q) = \range((L_\sigma)_{II})$.
We can always find a real $Q$ because of (iii) in
\cref{lem:psd:complex}, and it can be found e.g. with the QR
factorization or by computing the eigendecomposition of
$(L_{\sigma})_{II}$.  The space $\range(Q)$ is the orthogonal to the
interior components of floppy modes, and thus depends only on the
eigenvectors $x$ (as in \cref{eq:seigen}) of the $\sigma'(e)$, $e \in
E$, associated with non-zero eigenvalues (\cref{lem:floppy:char}, (ii)).
We can use $Q$ to write the pseudoinverse of $(L_\sigma)_{II}$ as
follows
\[
 (L_\sigma)_{II}^\dagger = Q(Q^T (L_\sigma)_{II}Q)^{-1}Q^T,
\]
and we get the alternate expression \cref{eq:dtn:psd} for the Dirichlet to
Neumann map.
\end{proof}

\section{Examples of matrix valued inverse problems on graphs}
\label{sec:matrixval:ex}

Here we use the graph theoretical results from \cref{sec:matrix:problems} to
give examples of matrix inverse problems on graphs that fit the mold of
\cref{sec:structure}.

\subsection{Matrix valued conductivity inverse problem}
Given a graph $G = (V,E)$ with boundary, the problem here is to find the
matrix valued conductivity $\sigma \in (\complex^{d\times d})^E$ from the
Dirichlet to Neumann map $\Lambda_{\sigma,0}$. We explain below why
this problem satisfies the assumptions of \cref{sec:structure}.
\begin{itemize}
 \item Here we take as {\bf admissible set}:
  \[
   R = \{ \sigma \in (\complex^{d \times
 d} )^E ~|~ \sigma = \sigma^T ~\text{and}~ \sigma' \succ 0 \}.
 \]
 This is an open convex set in  $(\complex^{d \times d} )^E$ which can
 be identified to an open convex subset of $\complex^{d^2|E|}$.  The
 {\bf forward map} is the map that to $\sigma \in R$ associates the
 Dirichlet to Neumann map $\Lambda_{\sigma,0} \in \complex^{d|B| \times
 d|B|}$.  This map is well defined for $\sigma \in R$ because of
 \cref{thm:dir:pd}. 

 \item By
  \cref{lem:interior} with $q_i = 0$, $i=1,2$, we have the
  {\bf boundary/interior identity}
 \[
  g_2^T(\Lambda_{\sigma_1,0} - \Lambda_{\sigma_2,0}) g_1 =
  b(S_{\sigma_2} g_2 , S_{\sigma_1} g_1  )^T\vect(\sigma_1 -
  \sigma_2). 
 \]
 Here we define $S_\sigma \in \complex^{d|E|\times d|B|}$ by its
 action on some $f \in \complex^{d|B|}$
 \[
  S_\sigma f = Du,
 \]
 where $u$ solves the Dirichlet problem \cref{eq:dir}, with $q=0$ and
 $u_B = f$. The bilinear map $b : \complex^{d|E|} \times
 \complex^{d|E|} \to \complex^{|E|d^2}$ is defined by $b(u,v) = u
 \outter v$, where the outer product $\outter$ is defined in
 \cref{sec:notation} and we are implicitly identifying
 $(\complex^d)^E$ with $\complex^{d|E|}$ and similarly for
 $(\complex^{d\times d})^E$ and $\complex^{|E|d^2}$.
 
 \item {\bf Analyticity assumption.} From \cref{lem:dir,lem:fov,lem:spd}
  the solution $u$ to the Dirichlet problem \cref{eq:dir} with $u_B =
  f$, $\sigma \in R$ 
  and $q=0$ is determined by $u_I = -(L_{II})^{-1} L_{IB} f$, where for clarity we
  omitted the subscript $\sigma$ from the graph Laplacian $L_\sigma$.
  Therefore the entries of $S_\sigma$ depend analytically on $\sigma$,
  for $\sigma \in R$.
\end{itemize}

\subsubsection{Relation between scalar and matrix valued conductivity
problems}
Here we show that if the Jacobian for the scalar conductivity problem on
a graph is injective at a conductivity $\sigma \in (0,\infty)^E$ then the
Jacobian for the matrix conductivity 
problem on the same graph but with conductivity of edge $e \in E$ given
by $\sigma(e) \identity
\in \complex^{d\times d}$ must also also be injective. Because of the
discussion in \cref{sec:uniqueness:ae}, this result shows that if
uniqueness a.e. holds for a scalar conductivity problem, then it must
also hold for the matrix valued problem. In particular 
uniqueness a.e. holds on the critical circular planar graphs that are
defined in \cite{Curtis:1994:FCC,Curtis:1998:CPG}. 
\begin{lemma}
 Let $G = (V,E)$ be a graph with boundary and let $s \in
 (0,\infty)^E$ be a scalar conductivity. Define the conductivity $\sigma
 \in (\real^{d \times d})^E$ by $\sigma(e) = s(e) \identity$ with
 $\identity$ being the $d \times d$ identity. Then if the Jacobian of the
 forward problem is injective for the scalar conductivity $s$, it must
 also be injective for the matrix valued conductivity $\sigma$. 
\end{lemma}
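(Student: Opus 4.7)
The plan is to exploit the fact that when $\sigma(e) = s(e)\identity$ for every edge, the matrix-valued Dirichlet problem decouples into $d$ independent scalar Dirichlet problems on the same graph with conductivity $s$. Concretely, writing $f \in (\complex^d)^B$ componentwise as $f_k \in \complex^B$ with $f_k(i) = [f(i)]_k$, the solution $u \in (\complex^d)^V$ to \cref{eq:dir} with $q = 0$ and $u_B = f$ has $k$-th component $u_k \in \complex^V$ equal to the scalar Dirichlet solution with conductivity $s$ and boundary data $f_k$. This is immediate from the block structure of $\diag(\sigma)$ when each $\sigma(e)$ is a multiple of the identity.

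The second step is to apply the boundary/interior identity (\cref{lem:interior}) to express the matrix Jacobian in the direction of a symmetric perturbation $\delta\sigma \in (\complex^{d\times d})^E$: for test boundary data $f, g \in (\complex^d)^B$,
\[
f^T (D\Lambda_\sigma\,\delta\sigma)\,g
= \sum_{e\in E}\sum_{k,l=1}^d (\nabla v^f_k)(e)\,[\delta\sigma(e)]_{kl}\,(\nabla v^g_l)(e),
\]
where $v^f_k, v^g_l$ are scalar Dirichlet solutions with conductivity $s$ and boundary data $f_k, g_l$. Suppose this quantity vanishes for all $f, g$. To extract entry $(k_0, l_0)$ of $\delta\sigma(e)$, pick $f$ with $f_k = 0$ for $k \neq k_0$ and $g$ with $g_l = 0$ for $l \neq l_0$. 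By the decoupling, $v^f_k \equiv 0$ for $k \neq k_0$ and $v^g_l \equiv 0$ for $l \neq l_0$, leaving only the single term
\[
\sum_{e\in E} (\nabla v^f_{k_0})(e)\,[\delta\sigma(e)]_{k_0 l_0}\,(\nabla v^g_{l_0})(e) = 0
\]
for every pair of scalar boundary data $f_{k_0}, g_{l_0} \in \complex^B$.

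By \cref{lem:lin} applied to the scalar conductivity problem, the last display expresses exactly that the scalar Jacobian at $s$ annihilates the scalar perturbation $e \mapsto [\delta\sigma(e)]_{k_0 l_0}$. Scalar injectivity then forces $[\delta\sigma(e)]_{k_0 l_0} = 0$ for every $e \in E$, and since $(k_0, l_0)$ was arbitrary, $\delta\sigma \equiv 0$. The only delicate point is verifying the component-wise vanishing of the scalar Dirichlet solutions when the corresponding boundary data is zero (a direct consequence of \cref{thm:dir:pd} applied to the scalar problem with $\sigma = s\identity$), which is what enables the clean isolation of a single matrix entry of $\delta\sigma(e)$. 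The symmetry constraint on $\delta\sigma$ does not obstruct the extraction, since we recover every $(k,l)$ entry individually and the resulting conclusion $\delta\sigma \equiv 0$ is stronger than what is needed on the symmetric tangent space.
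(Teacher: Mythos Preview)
Your proof is correct. Both your argument and the paper's hinge on the same key observation: when $\sigma(e)=s(e)\identity$, the matrix Dirichlet problem decouples into $d$ independent copies of the scalar problem, so that solutions of the form ``put scalar solution $v$ in component $k$, zero elsewhere'' are available. The difference is purely in how the linear algebra is phrased. The paper works on the \emph{range} side: it explicitly writes these decoupled solutions as $v\kron e_j$, computes the outer products \eqref{eq:prodrel}, and shows the resulting vectors span all of $\real^{d^2|E|}$, hence $\range(W(\sigma,\sigma))=\real^{d^2|E|}$. You work on the dual \emph{nullspace} side: you take $\delta\sigma$ annihilated by the Jacobian and, by choosing boundary data supported in a single component on each side, isolate one matrix entry $[\delta\sigma(e)]_{k_0 l_0}$ at a time, reducing to injectivity of the scalar Jacobian. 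Your route is a bit more direct and avoids the Kronecker-product bookkeeping; the paper's route makes the tensor structure of $W(\sigma,\sigma)$ more explicit. Substantively they are the same argument.
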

\begin{proof}
We need to show that
 \[
  \range(W(s,s)) = \real^{|E|} \implies
  \range(W(\sigma,\sigma)) = \real^{d^2|E|}.
\]
To this end, we first link the Laplacian $L_\sigma$ for the matrix valued
conductivity $\sigma$ is a $d|V| \times d|V|$ matrix to the Laplacian
for a graph $G_d = (V_d,E_d)$ that corresponds to having $d$ copies of
the graph $G$ without any edges between the copies, and each copy of $G$
having the same scalar conductivity $s$. To be more precise the vertex
set of $G_d$ is $V_d = V \times \{1,\ldots,d\}$, the edge set is 
\[
E_d = \{ \{ (v_1,\ell_1) , (v_2,\ell_2) \} \in V_d
\times V_d ~|~ \ell_1 = \ell_2 ~\text{and}~\{v_1,v_2\} \in E \}.
\]
Then with an appropriate ordering of $V_d$, we have $L_\sigma = L_{s_d}$,
where the conductivity $s_d \in (0,\infty)^{E_d}$ is defined by 
\[
 s_d(\{ (v_1,\ell_1) , (v_2,\ell_2) \}) = \delta_{\ell_1,\ell_2}
 s(\{v_1,v_2\})
\]
for all $\{v_1,v_2\} \in E$, $\ell_1,\ell_2 \in \{1,\ldots,d\}$, and
with $\delta_{\ell_1,\ell_2}$ being the Kronecker delta. Now take a
solution $v \in \real^V$ to the Dirichlet problem on $G$ with scalar
conductivity $s$ and let $e_i$ be the $i-$th canonical basis vector of
$\real^d$. Then up to a reordering of $V_d$, $v \kron e_i$ solves the
Dirichlet problem on $G$ with matrix valued conductivity $\sigma$ and
boundary data $v|_B \kron e_i$, $i=1,\ldots,d$. Here we used the
Kronecker product $\kron$ which we recall for convenience in
\eqref{eq:kron}. 
Let $v_j$ be the
solution to the Dirichlet problem on $G$ with conductivity $s$ such
that $v_j|_B = e_j$, $j=1,\ldots,|B|$.  Then we have
\[
 \range(W(s,s)) = \linspan\{ \vect((\nabla v_i) \outter (\nabla
 v_{i'})),~
 i,i'=1,\ldots,|B| \},
\]
where we used the outer product $\outter$ defined in \eqref{eq:outer}.
Since $\nabla(v \kron e_i) = (\nabla v) \kron e_i$ for any $v \in
\real^V$ we should have that 
\begin{equation}
(\nabla (v_i \kron e_j) \outter \nabla (v_{i'} \kron e_{j'})) =
 [ (\nabla v_i) \hadamard (\nabla v_{i'}) ] \kron (e_j e_{j'}^T),
\label{eq:prodrel}
\end{equation}
for any $i,i' = 1,\ldots,|B|$ and $j,j' = 1,\ldots,d$.  Now let us
 consider the subspace $M \subset \real^{d^2 |E|}$ spanned by
 all possible products
\eqref{eq:prodrel} in vector form, i.e.
\[
 M \equiv \linspan\left\{ \vect \left[ (\nabla (v_i \kron e_j)) \outter \nabla (v_{i'} \kron
 e_{j'})) \right],~i,i'=1,\ldots,|B|,~j,j'=1,\ldots,d \right\}.
\]
Since $\range(W(s,s)) = \real^E$ we deduce that $M = \real^{d^2|E|}$.
 Indeed, the $d^2$ subspaces associated with the pairs $(j,j') \in
 \{1,\ldots,d\}^2$ are mutually orthogonal and each has dimension $|E|$.
 The desired result follows because we have the inclusion $M \subset
 \range(W(\sigma,\sigma))$.
\end{proof}

\subsection{Matrix valued Schr\"odinger inverse problem}
Given a graph $G = (V,E)$ with boundary, the inverse problem we consider
here is to find the symmetric matrix valued Schr\"odinger potential $q \in
(\complex^{d\times d})^V$ from the Dirichlet to Neumann map
$\Lambda_{\sigma,q}$, where the conductivity $\sigma \in (\complex^{d
\times d})^E$ is symmetric with $\sigma' \succ 0$ and is assumed to be
known. This problem has the structure of the abstract inverse problem of
\cref{sec:structure}, as we see next.

\begin{itemize}
   \item The {\bf admissible set} is
    \[
     R = \left\{ q \in (\complex^{d \times d})^V ~|~ q =
     q^T~\text{and}~q_I' \succ -\lambda_{\min}((L_{\sigma'})_{II})
     \right\}.
    \]
   This is an open convex set in $(\complex^{d \times d})^V$ which can
   be identified to an open convex subset of $\complex^{d^2|V|}$. The
   {\bf forward map} is the map that to $q \in R$ associates the Dirichlet to
   Neumann map $\Lambda_{\sigma,q} \in \complex^{d|B| \times d|B|}$.
   This map is given by \eqref{eq:dtnmap} and is well defined for $q \in R$ because of \cref{thm:dir:pd}.

  \item The {\bf boundary/interior identity} is given by applying
   \cref{lem:interior} with $\sigma_i=\sigma$, $i=1,2$:
   \[
    g_2^T(\Lambda_{\sigma,q_1} - \Lambda_{\sigma,q_2})g_1 = b( S_{q_2}
  g_2 ,S_{q_1} g_1)^T \vect(q_1 - q_2).
   \]
  Here we define $S_q \in \complex^{d|V| \times d|B|}$ by its action
  on some $f \in \complex^{d|B|}$
  \[
   S_q f = u_I,
  \]
 where $u$ solves the Dirichlet problem \eqref{eq:dir} with $u_B=f$. The
 bilinear map $b : \complex^{d|V| \times d|V|} \to \complex^{|V|d^2}$
 is defined by $b(u,v) = u\outter v$, where the block-wise outer product $\outter$ is
 defined in \eqref{eq:outer} and we implicitly identify
 $(\complex^d)^V$ with $\complex^{d|V|}$ and $(\complex^{d \times
 d})^V$ with $\complex^{|V|d^2}$.

 \item {\bf Analyticity assumption.} From \cref{lem:spd}, the solution
  $u$ to the Dirichlet problem \eqref{eq:dir} with $u_B = f$ and $q \in
  R$ is determined by $u_I = - (L_{II} + \diag(q_I))^{-1} L_{IB}f$, where
  we omitted the subscript $\sigma$ from the graph Laplacian $L_\sigma$.
  Hence the entries of $S_q$ are analytic for $q \in R$.
\end{itemize}

\subsection{Rank deficient matrix valued conductivity inverse problem}
\label{sec:rkdefip}
Here we consider the inverse problem of recovering a conductivity
$\sigma \in (\complex^{d\times d})^E$ that is rank deficient from its
Dirichlet to Neumann map $\Lambda_{\sigma,0}$. Our
theory applies to a simpler problem, where we focus on finding the
eigenvalues $\lambda \in (\complex^r)^E$ of the conductivity $\sigma \in
(\complex^{d \times d})^E$, assuming that the eigenvectors $x \in
(\complex^{d \times r})^E$ are given, where used the  
notation of \cref{sec:sigpsd}. As in \cref{sec:sigpsd}, we have only
considered the case where the rank is $r$ for all edges. The more
general case of rank depending on the edges can also be dealt with, but
is not presented here for sake of simplicity. 

\begin{itemize}
   \item We take as {\bf admissible set}
    \[
     R = \left\{ \lambda \in (\complex^r)^E ~|~ \lambda' > 0 \right\}.
    \]
   Clearly $R$ is an open convex set in $(\complex^r)^E$, which can be
   identified to an open convex subset of $\complex^{r|E|}$. The forward
   map associates to $\lambda \in R$ the Dirichlet to Neumann map
  $\Lambda_{\sigma(\lambda),0}$ where $\sigma(\lambda)$ has eigenvectors $x$ and eigenvalues
  $\lambda$, i.e. $\sigma(\lambda)$ satisfies \eqref{eq:seigen}.
  \Cref{lem:dtn:psd} guarantees that this map is well defined for
  $\lambda \in R$.

  \item  Let
   $\lambda_1,\lambda_2 \in R$. By
   \cref{lem:interior} with $\sigma_i \equiv \sigma(\lambda_i)$ and
  $q_i=0$, $i=1,2$, we get the {\bf boundary/interior identity}
  \[
   g_2^T (\Lambda_{\sigma_1,0} - \Lambda_{\sigma_2,0}) g_1 =
  b(S_{\lambda_2}g_2,S_{\lambda_1}g_1)^T (\lambda_1 - \lambda_2).
  \]
 We define the matrix $S_\lambda \in \complex^{r|E|\times d|B|}$ by its
 action on some $f\in \complex^{d|B|}$,
 \[
  S_\lambda f = \diag(x)^T \nabla u,
 \]
 where $u$ solves the Dirichlet problem \cref{eq:dir} with boundary
 data $u_B = f$, conductivity $\sigma(\lambda)$ satisfying \cref{eq:seigen} and
 $q=0$. Recall that the Dirichlet problem solution is
 determined up to floppy modes. However from the floppy mode
 characterization in \cref{lem:floppy:char}, we see that the definition
 of $S_\lambda$ is independent of the choice of floppy mode. The
 bilinear map $b: \complex^{r|E|} \times \complex^{r|E|} \to
 \complex^{r|E|}$ is simply the Hadamard product, i.e. $b(u,v) = u
 \hadamard v$, and as before we identify $(\complex^r)^E$ with
 $\complex^{r|E|}$.

\item {\bf Analyticity assumption.} From the proof of \cref{lem:dtn:psd}
 (see \cref{sec:dtn:pinv}), a solution $u$ to the Dirichlet problem
  \cref{eq:dir} with boundary data $u_B = f$, conductivity
  $\sigma(\lambda)$ satisfying \cref{eq:seigen} and $q=0$, is determined
  by 
  \[
    u_I = - Q(Q^T (L_\sigma)_{II}Q)^{-1}Q^T L_{IB} f,
  \]
 where $Q$ is a real matrix such that $Q^T Q = I$ and $\range(Q) =
 \range((L_\sigma)_{II})$. Since $Q$ depends only on the graph and the
 (known a priori) eigenvectors $x$, the entries of $u_I$ are analytic for
 $\lambda \in R$. Hence the entries of $S_\lambda$ must also be analytic
 for $\lambda \in R$.
\end{itemize}

\section{Application to networks of springs, masses and dampers}
\label{sec:elnets}
\subsection{Spring networks}
\label{sec:springnets}
Consider a graph $G = (V,E)$ with boundary $B$ and let $p \in
(\real^d)^V$ be a function representing the equilibrium position of each node in
dimension $d=2$ or $3$. Each edge $e \in E$ represents a spring with positive spring
constant given by the function $k \in (0,\infty)^E$. 
Let $u \in (\real^d)^V$ denote the displacements of the nodes with
respect to the equilibrium position. The quantity $\nabla u \in
(\real^d)^E$ is the net spring displacement. By Hooke's law, the force
exerted by a spring 
is proportional to the net spring displacement. Here the proportionality
is given by a function $k \in
(0,\infty)^E$. For infinitesimally small displacements, the force
exerted by spring $\{i,j\} \in E$ is proportional to the projection of the net
displacement of spring $\{i,j\}$ on the direction $p(i)-p(j)$. In other
words, the forces are $\diag(\sigma) \nabla u$, where $\sigma \in
(\real^{d\times d})^E$ is the positive semidefinite conductivity
\begin{equation}
\label{eq:kcond}
 \sigma(\{i,j\}) = k(\{i,j\}) \frac{[p(i)-p(j)] [p(i)
 -p(j)]^T}{[p(i)-p(j)]^T [p(i) -p(j)]},~\text{for}~\{i,j\} \in E. 
\end{equation}
Now assume we displace the boundary nodes by an amount $g \in
(\real^d)^B$. If the interior nodes are left to move freely, the net
forces at the interior nodes should be zero, this condition is
equivalent to $(L_\sigma u)_I = 0$. Hence finding the displacements in a
spring network arising from (static) boundary displacements is the same
as solving the Dirichlet problem \eqref{eq:dir} with the particular
matrix valued conductivity \eqref{eq:kcond} and zero Schr\"odinger
potential. Using \cref{thm:dir:psd}, we see that the interior
displacements are uniquely determined by the boundary displacements (up
to floppy modes) and that the Dirichlet to Neumann map $\Lambda_\sigma$
is given by \cref{lem:dtn:psd}. In this particular case this map is
called {\em displacement to forces} map.

\subsection{Elastodynamic networks with damping}
\label{sec:elastodynamic}
We now consider the case where the displacements depend on time, i.e.
the function $u: V \times \real \to \real^d$ is defined such that
$u(i,t)$ is the displacement about the equilibrium position $p(i)$ of
node $i\in V$ at time $t \in \real$. We use the notation $\dot{u} =
du/dt$ and $\ddot{u} = d^2u/dt^2$ and we assume that all nodes have a
non-zero mass, which is given by the function $m \in (0,\infty)^V$.

\subsubsection{Viscous damping} 
\label{sec:damping}
We consider two kinds of viscous damping. The first is {\em spring
damping}, which is proportional to the net velocity of a spring
and is assumed to be in the same direction as the equilibrium position
of the springs, with proportionality constant given by a function $c_E
\in [0,\infty)^E$. This corresponds to having a damper in parallel with
each spring. The net forces associated with this damping are given by
$L_\mu \dot{u}$, where $\mu \in (\real^{d\times d})^E$ is defined by
\begin{equation}
  \label{eq:mucond}
  \mu(\{i,j\}) = c_E(\{i,j\}) \frac{[p(i)-p(j)] [p(i)
 -p(j)]^T}{[p(i)-p(j)]^T [p(i) -p(j)]},~\text{for}~\{i,j\} \in E.
\end{equation}
The second is {\em nodal damping}, meaning that each node is
inside a small cavity containing a viscous fluid and is thus subject to
a damping force proportional to the node velocity, with the
proportionality constant given by a function $c_V \in [0,\infty)^V$.
The forces associated with this kind of damping are
$\diag(q_{\text{damp}}) \dot{u}$ where $q_{\text{damp}} \in (\real^{d\times
d})^V$ is defined by $q_{\text{damp}}(i) = c_V(i) \identity$, for $i \in V$ and
$\identity$ being the $d\times d$ identity matrix. 

\subsubsection{Equations of motion in time domain}
Putting everything together and applying Newton's second law, we get the
equations of motion for an elastodynamic network:
\begin{equation}
 \diag(q_{\text{mass}}) \ddot{u} + (\diag(q_{\text{damp}}) + L_\mu)
 \dot{u} + L_\sigma u = f,
 \label{eq:motion1}
\end{equation}
where $q_{\text{mass}} \in (\real^{d\times d})^V$ is defined by
$q_{\text{mass}}(i) = m(i) \identity$ for $i \in V$. The function $f : V
\times \real \to \real^d$ is a function representing any external
forces, i.e. $f(i,t)$ is the external force exerted on node $i \in V$ at
time $t$. This second order system of ordinary differential equations
can be written as
\begin{equation}
 M \ddot{u} + C \dot{u} + K u = f,
 \label{eq:motion2}
\end{equation}
where $M = \diag(q_{\text{mass}})$ is the {\em mass matrix}, $C =
\diag(q_{\text{damp}})+ L_\mu$ is the {\em damping matrix} and $L_\sigma$ is
the {\em stiffness matrix}.

\subsubsection{Frequency domain formulation and the Dirichlet problem}
\label{sec:fdfdp}
For a time harmonic displacement $u(i,t) = \exp[\jmath\omega t] \hat
u(i,\omega)$, the equations of motion \eqref{eq:motion2} become
\begin{equation}
 (-\omega^2 M + \jmath\omega C + K) \hat u = \hat f.
 \label{eq:motion3}
\end{equation}
Now consider the problem of finding the (frequency domain) displacements
$\hat u_I$ at the interior nodes knowing the displacements $\hat u_B$ at
the boundary nodes and that there are no external forces at the interior
nodes (i.e.  $\hat f_I = 0$).  We immediately see that we have another
instance of the Dirichlet problem \eqref{eq:dir} with complex
conductivity $\sigma + \jmath\omega \mu$ and complex Schr\"odinger
potential $-\omega^2 q_{\text{mass}} + \jmath\omega q_{\text{damp}}$.
Unfortunately we cannot apply \cref{thm:dir:pd} directly because we do
not have $\sigma \succ 0$ or $-\omega^2 q_{\text{mass}} \succ 0$. To
remedy this, we assume there is always a small amount of damping at the
nodes i.e. $c_V \in (0,\infty)^V$, in a way reminiscent of the limiting
absorption principle for the Helmholtz equation. We rewrite the
equations of motion \eqref{eq:motion3} as follows
\begin{equation}
  (\jmath\omega M + C + (\jmath\omega)^{-1} K)(\jmath \omega \hat u) = \hat f.
\end{equation}
Again if the forces at the interior nodes are equilibrated, this is an
instance of the Dirichlet problem \eqref{eq:dir} with complex
conductivity $\mu + (\jmath\omega)^{-1} \sigma$ and complex
Schr\"odinger potential $q_{\text{damp}} + \jmath\omega
q_{\text{mass}}$.  A positive damping at the nodes guarantees $
q_{\text{damp}} \succ 0$. Thus the Dirichlet problem admits a unique
solution by \cref{thm:dir:pd}. Indeed the condition $(L_{\mu})_{II}
\succ -\lambda_{\min}(q_{\text{damp}})$ always holds in this case
because $(L_{\mu})_{II} \succeq 0$. Hence  the Dirichlet to Neumann map
$\Lambda_{\mu + (\jmath\omega)^{-1} \sigma , q_{\text{damp}} +
\jmath\omega q_{\text{mass}}}$ is well defined by \eqref{eq:dtnmap} and
so is the Dirichlet to Neumann map for the original problem:
$\Lambda_{\sigma + \jmath\omega\mu, -\omega^2 q_{\text{mass}} +
\jmath\omega q_{\text{damp}}}$, as can be seen from a homogeneity
argument. Since the latter map associates the frequency domain
displacements to frequency domain forces, we also call it {\em
displacement to forces map}.

\subsection{Spring constant inverse problem: static case}
Let us consider the inverse problem of finding the spring constants $k
\in \real^E$ from
the static displacement to forces map $\Lambda_{\sigma(k),0}$ of a network of springs. 
We assume
the equilibrium positions $p \in (\real^d)^V$ of the nodes are known.
Uniqueness for this inverse problem can be established using the result
in \cref{sec:rkdefip} for rank deficient matrix valued conductivities,
which we adapt here to this particular problem. Since
we are not aware of a physically relevant interpretation of complex
valued spring constants in the static case, we take spring constants in the {\bf
admissible set}
\[
 R = (0,\infty)^E.
\]
The forward map associates to $k \in R$, the displacement to forces map
$\Lambda_{\sigma(k),0}$. The conductivity $\sigma(k)$ is defined in
\eqref{eq:kcond}. For an edge $\{i,j\}$, the spring constant
$k(\{i,j\})$ is the only non-zero eigenvalue of the conductivity
$\sigma(\{i,j\})$. To write the boundary/interior identity for this
problem we introduce the function $x\in (\real^d)^E$ that to an edge
$\{i,j\} \in E$ associates the corresponding normalized eigenvector,
i.e.
\[
x(\{i,j\}) = \frac{p(i)-p(j)}{\|p(i)-p(j)\|}.
\]
The {\bf boundary/interior identity} is then
\[
 g_2^T ( \Lambda_{\sigma_1,0} - \Lambda_{\sigma_2,0}) g_1 = ([ S_{k_1}
 g_1] \hadamard [S_{k_2} g_2 ])^T (k_1
 - k_2),
\]
where $\sigma_i \equiv \sigma(k_i)$, $i=1,2$ and $g_1,g_2$ are vectors in
$\real^{d|B|}$. The matrix $S_{k_i} \in \real^{|E| \times d|B|}$ is
defined such that the vector $S_{k_i} g_i$ contains the components of
$\nabla u_i$ along the spring directions, i.e.
\begin{equation}
\label{eq:ski}
 (S_{k_i} g_i)(e) = x(e)^T (\nabla u_i)(e),~e\in E,
\end{equation}
where $u_i$ is the displacement arising from displacing the boundary
nodes by $g_i$, $i=1,2$. Concretely, this problem fits the mold of
\cref{sec:structure}. Thus from \cref{sec:uniqueness:ae}, if the
linearization of the inverse problem for the spring constants in a
spring network is injective for particular spring constants, then it
must also be injective for almost all other spring constants.

\subsection{Spring constant inverse problem assuming masses are known}
Here we consider the problem where the operating frequency $\omega$, the
equilibrium position of the nodes $p \in (\real^d)^V$, the masses $m \in
(0,\infty)^V$ and mass dampers $c_V \in (0,\infty)^V$ are all known, but
we want to recover the spring constants $k \in (0,\infty)^E$ and spring
dampers $c_E \in (0,\infty)^E$ from the displacement to forces map at
the frequency $\omega$.
\begin{itemize}
   \item The
   {\bf admissible set} is
   \[
    R =  \{  \rho \in \complex^E~|~  \rho'>0,
    \sign(\omega)  \rho''>0\},
   \]
   where we grouped for convenience the spring constants and spring
   dampers into a single complex valued edge function $\rho$. To be more
   precise, if $\rho \in R$ we have $\rho' = k$ and $\rho'' = \omega
   c_E$.  The {\bf forward map} associates to $\rho \in R$ the
   displacement to forces map $\Lambda_{\sigma(\rho),q}$, where
   $\sigma(\rho)$ is defined as in \eqref{eq:kcond} with $k\equiv \rho$
   and $q \equiv -\omega^2q_{\text{mass}} + \jmath\omega
   q_{\text{damp}}$. The forward map is well defined and given by
   \eqref{eq:dtn} for all $\rho \in R$ because we assumed damping at the
   nodes, see \cref{sec:fdfdp}.

   \item The {\bf boundary/interior identity} is
   \[
    g_2^T(\Lambda_{\sigma_1,q} - 
          \Lambda_{\sigma_2,q}) g_1
	  = 
	  ( [S_{\rho_1} g_1] \hadamard [S_{\rho_2} g_2 ])^T (\rho_1 -
	  \rho_2),
   \]
   where $\sigma_i \equiv \sigma(\rho_i)$,  $g_i \in \complex^{d|B|}$ and
   the matrix $S_{k_i}$ is defined as in \cref{eq:ski} for $i=1,2$.

   \item {\bf Analyticity assumption.} We can use \cref{lem:spd} to
   guarantee that the solution to the Dirichlet problem with boundary
   displacements $u_B = f$ is given by $u_I = -(L_{II} +
   \diag(q_I))^{-1}L_{IB}f$, where we omitted the subscript $\sigma(\rho)$ from the
   Laplacian $L_{\sigma(\rho)}$. This implies the entries of $S_{\rho}$ are
   analytic for $\rho \in R$.
\end{itemize}
Thus the problem of finding the spring constants when the masses are
known fits the mold of \cref{sec:structure}. 
The above argument can be adapted to the case where there are no spring
dampers, i.e. $c_E = 0$. In this case the admissible set would be
$R = (0,\infty)^E$. However the problem no longer satisfies the
assumptions of \cref{sec:structure} if we 
do not know if spring dampers are present, because with $c_E \in
[0,\infty)^E$ the admissible set would not be open.

\subsection{Mass inverse problem assuming spring constants are known}
Here we assume that the operating frequency $\omega$, the equilibrium
position of the nodes $p \in (\real^d)^E$, the spring constants $k \in
(0,\infty)^E$ and the spring dampers $c_E \in [0,\infty)^E$ are known.
The inverse problem is to find the masses $m \in (0,\infty)^V$ and nodal
dampers $c_V \in (0,\infty)^V$ from the displacement to forces map at
the frequency $\omega$. As we see next, this problem also satisfies the
assumptions of \cref{sec:structure}.
\begin{itemize}
   \item The {\bf admissible set} is 
   \[
    R = \{ \rho \in \complex^V ~|~ \rho' < 0, \sign(\omega)\rho'' > 0\},
   \]
   where we grouped for convenience the masses and nodal dampers into a
   single complex valued nodal function $\rho$. If $\rho \in
   R$, then $\rho' = -\omega^2 m$  and
   $\rho'' = \omega c_V$.  The {\bf forward map} associates to $\rho \in R$
   the displacement to forces map $\Lambda_{\sigma,q(\rho)}$, where
   $\sigma$ is defined as in \eqref{eq:kcond} with $k \equiv k + \jmath
   \omega c_E$, and $q(\rho) \in (\complex^{d\times d})^V$ is defined by
   $q(\rho)(i) = \rho(i) \identity$ for all vertices $i \in V$.  The
   displacement to forces map is well defined and given by
   \eqref{eq:dtn} for all $\rho \in R$ because we assumed damping at the
   nodes, see \cref{sec:fdfdp}.

   \item The {\bf boundary/interior identity} is
   \[
    g_2^T ( \Lambda_{\sigma,q(\rho_1)} - \Lambda_{\sigma,q(\rho_2)}) g_1
    =
    ( u_1 \hadamard u_2 )^T\vect(q(\rho_1-\rho_2)),
   \]
   where $u_i$ solves the Dirichlet problem \eqref{eq:dir} with
   conductivity $\sigma + \jmath \omega \mu$ and Schr\"odinger potential
   $q(\rho_i)$, $i=1,2$.

   \item {\bf Analyticity} follows from the solution $u$ to the
   Dirichlet problem being well defined from all Schr\"odinger
   potentials of the form $q(\rho)$ (see the discussion in
   \cref{sec:fdfdp}).
\end{itemize}

\section{Summary and Perspectives}
\label{sec:summary}
We have presented several inverse problems on graphs that share the
common structure of \cref{sec:structure}. In these inverse problems, the
unknowns are matrices (or their eigenvalues) defined on the edges or
nodes of a graph (\cref{sec:matrix:problems}). By giving sufficient
conditions under which the Dirichlet problem on a graph with matrix
valued weights admits a unique solution we can deduce a set of
parameters on which the forward map is analytic. In cases where the weights
are rank deficient, the solution is not unique but can be determined up
to ``floppy modes'' that depend only on the nullspaces of the
weights (\cref{sec:matrix:problems,sec:diruni}). Thus the forward map
can still be shown to be analytic in this case.  Analyticity of the
forward map and its Jacobian have practical consequences that are given
in \cref{sec:uniqueness:ae,sec:applications}. Particular examples of
inverse problems on graphs are given in \cref{sec:matrixval:ex}, with a focus
inverse problems in elastodynamic networks (\cref{sec:elnets}) at a
single frequency. Multi-frequency or time domain problems are left for
future studies.

There remains many open questions. For example, it is not clear how to
find a graph on which uniqueness a.e. holds for a given problem. This
was done in \cite{Boyer:2015:SDC} by trying many random graphs drawn
from the Erd\H{o}s-R\'enyi model \cite{Erdos:1959:ORG}. A similar
approach could be taken here.  It is also not clear whether direct
solution methods such as the layer peeling algorithm in
\cite{Curtis:1994:FCC} exist for these matrix inverse problems on
networks. Finally, the theoretical results we present here rely on
analytic continuation, which is a notoriously unstable procedure. 

\section*{Acknowledgments}
This work was supported by the National Science Foundation grant
DMS-1411577. Most of these results were derived in the
Spring 2016 introduction to research class called ``Network Inverse
Problems'', supported by the same grant, at the University of Utah. FGV
also thanks support from the National Science Foundation grant
DMS-1439786, while FGV was in residence at the Institute for
Computational and Experimental Research in Mathematics in Providence,
RI, during the Fall 2017 semester. FGV thanks Laboratoire Jean Kuntzmann for
hosting him during the 2017-2018 academic year. Finally we thank
Vladimir Druskin for his comments on an earlier version of this
manuscript.
\appendix

\section{Facts about analytic functions of several complex variables}
\label{app:faf}
A function $f : \complex^n \to \complex$ is analytic on some open set $R
\subset \complex^n$ if for any $z_0 \in R$, the function $f(z)$ can be
expressed as a convergent power series, i.e. we can find complex coefficients
$c_\alpha$ for which the series
\[
 f(z) = \sum_{\alpha \in \nat^n} c_\alpha (z-z_0)^\alpha,
\]
converges for all $z \in R$. Here we used the notation $z^\alpha =
z_1^{\alpha_1} z_2^{\alpha_2} \cdots z_n^{\alpha_n}$, for a multi-index
$\alpha \in \nat^n$. Rational functions of the form $P(z) / Q(z)$,
for $P(z)$ and $Q(z)$ polynomials, are analytic on any connected open
set where $Q(z) \neq 0$. Moreover, the product and the sum of two
analytic functions is also analytic. The uniqueness lemma below is a
consequence of analytic continuation, i.e. if $f(z)$ is analytic for
$z\in R$ and we can find $z_0$ such that $f(z_0) \neq 0$, then the {\em
zero set} of $f$
\[
 Z \equiv \{ z \in R ~|~ f(z) = 0 \},
\]
must be a set of measure zero with respect to the Lebesgue measure on
$R$ (see e.g. \cite{Gunning:1965:AFS}).

\bibliographystyle{siamplain}
\bibliography{schroe}

\end{document}